\def\refer#1{~\ref{#1}}
\def\refeq#1{~(\ref{#1})}
\def\ccite#1{~\cite{#1}}
\def\longformule#1#2{
\displaylines{ \qquad{#1} \hfill\cr \hfill {#2} \qquad\cr } }
\def\inte#1{
\displaystyle\mathop{#1\kern0pt}^\circ }
\let\pa=\partial
\let\d=\delta
\let\e=\varepsilon
\let\lam=\lambda
\let\f=\frac
\let\p=\psi
\let\om=\omega
\let\D=\Delta
\let\Om=\Omega
\let\wt=\widetilde
\let\wh=\widehat
\def\cF{{\mathcal F}}
\def\cG{{\mathcal G}}
\def\cH{{\mathcal H}}
\def\cI{{\mathcal I}}
\def\cN{{\mathcal N}}
\def\grad{\nabla}
\def\virgp{\raise 2pt\hbox{,}}
\def\cdotpv{\raise 2pt\hbox{;}}
\def\eqdefa{\buildrel\hbox{\footnotesize def}\over =}
\def\C{\mathop{\mathbb C\kern 0pt}\nolimits}
\def\DD{\mathop{\mathbb D\kern 0pt}\nolimits}
\def\EE{\mathop{{\mathbb E \kern 0pt}}\nolimits}
\def\K{\mathop{\mathbb K\kern 0pt}\nolimits}
\def\N{\mathop{\mathbb N\kern 0pt}\nolimits}
\def\Q{\mathop{\mathbb Q\kern 0pt}\nolimits}
\def\R{\mathop{\mathbb R\kern 0pt}\nolimits}
\def\SS{\mathop{\mathbb S\kern 0pt}\nolimits}
\def\ZZ{\mathop{\mathbb Z\kern 0pt}\nolimits}
\def\TT{\mathop{\mathbb T\kern 0pt}\nolimits}
\def\P{\mathop{\mathbb P\kern 0pt}\nolimits}
\newcommand{\ds}{\displaystyle}
\def\dv{\mbox{\rm div}}
\def\dive{\mathop{\rm div}\nolimits}
\def\na{\nabla}
\def\p{\partial}
\def\uh{u^{\rm h}}
\def\xh{x_{\rm h}}
\def\Rh{ {\R^2_{\rm h}} }
\def\Rv{ {\R_{\rm v}}}
\def\presspO{p_{\rm h}}
\def\vapp{\wt u^{\rm h}_{\e, \rm app}}
\def\uapp{ u^{\rm h}_{\e, \rm app}}
\def\xih{\xi_{\rm h} }
\def\LVLH {L^\infty_{\rm v} (L^2_{\rm h})}
\def\1{\mathbbm{1}}
\newcommand{\w}[1]{\langle {#1} \rangle}
\newcommand{\beq}{\begin{equation}}
\newcommand{\eeq}{\end{equation}}
\newcommand{\ben}{\begin{eqnarray}}
\newcommand{\een}{\end{eqnarray}}
\newcommand{\beno}{\begin{eqnarray*}}
\newcommand{\eeno}{\end{eqnarray*}}
\newcommand{\andf}{\quad\hbox{and}\quad}
\newcommand{\with}{\quad\hbox{with}\quad}
\newtheorem{thm}{Theorem}[section]
\newtheorem{lem}{Lemma}[section]
\newtheorem{rmk}{Remark}[section]
\newtheorem{col}{Corollary}[section]
\begin{document}
\title[Global solutions of 3-D
 Navier-Stokes system]
{Remarks on the global solutions of 3-D
 Navier-Stokes system with one slow variable}
 \author[J.-Y. CHEMIN]{Jean-Yves Chemin}
\address [J.-Y. Chemin]%
{Laboratoire J.-L. Lions, UMR 7598 \\
Universit\'e Pierre et Marie Curie, 75230 Paris Cedex 05, FRANCE }
\email{chemin@ann.jussieu.fr}
\author[P. ZHANG]{Ping Zhang}%
\address[P. Zhang]
 {Academy of
Mathematics $\&$ Systems Science and  Hua Loo-Keng Key Laboratory of
Mathematics, The Chinese Academy of Sciences, Beijing 100190, CHINA}
\email{zp@amss.ac.cn}

\date{03/17/2014}

\begin{abstract} By applying  Wiegner' method in \cite{Wiegner}, we first prove the large time decay estimate
for the global solutions of a 2.5 dimensional Navier-Stokes system,
which is a sort of singular perturbed 2-D Navier-Stokes system in
three space dimension. As an application of this decay estimate, we
give a simplified proof for the global wellposedness result in
\cite{cg3} for 3-D Navier-Stokes system with one slow variable. Let us also
mention that compared with the assumptions for the initial data in
\cite{cg3}, here the assumptions in Theorem \ref{slowvarsimplifie}
are weaker.
\end{abstract}

\maketitle

\noindent {\sl Keywords:}  Incompressible Navier-Stokes Equations,
slow variable, decay estimate, Littlewood-Paley Theory\

\vskip 0.2cm

\noindent {\sl AMS Subject Classification (2000):} 35Q30, 76D03  \

\setcounter{equation}{0}
\section{Introduction}

The first part of this paper is devoted to the study of the
following system
\begin{equation*}
 {\rm (NS 2.5D)}\qquad\left\{\begin{array}{l}
\displaystyle \pa_t u_\e^{\rm h} + \dv_{\rm h} (u^{\rm h}_\e\otimes
u^{\rm h}_\e) -\D_\e u^{\rm h}_\e =-\grad_{\rm h} p^{\rm h}_{\e},
\qquad (t,x)\in\R^+\times\R^3,
\\
\displaystyle \dv_{\rm h}\,u^{\rm h}_\e  = 0, \\
\displaystyle { u^{\rm h}_\e}|_{t=0}=u_0^{\rm h}
\end{array}\right.
\end{equation*}
where~$u^h(t,x_{\rm h}, z) = \bigl( u^1 (t,\xh,z),u^2(t,\xh,z)\bigr)
$ with~$(\xh,z)$ in~$\R^2_{\rm h} \times \Rv,$ $\na_{\rm
h}=(\p_1,\p_2),$ $\D_{\rm h}=\p_1^2+\p_2^2$ and~$\D_\e=\D_{\rm
h}+\e^2\p_{3}^2$. Moreover,~$\e$ is a (small) positive parameter.
Let us first point out that in the case when~$\e$ is equal to~$0$,
the above system is simply the two dimensional  incompressible
Navier-Stokes system with an initial data depending on the real
parameter~$z$.

The motivation of studying this system~(NS2.5D) comes from  the study of global wellposedness of the three dimensionnal incompressible Navier-Stokes system~(NS3D) which is
\begin{equation*}
 {\rm (NS3D)}\qquad\left\{\begin{array}{l}
\displaystyle \pa_t u + \dv (u\otimes u) -\D u=-\grad p,\quad
\mbox{in} \:
\R^+ \times \R^3\\
\displaystyle \dv u = 0, \\
\displaystyle { u}|_{t=0}=u_0
\end{array}\right.
\end{equation*}
where the initial data are said to be "slowly varying" with respect to the vertical variable
which means that they are of the form
\begin{equation*}
u_0(x)=(u_0^{\rm h}(\xh,\e x_3),0)  \with \dive_h u^{\rm h}_0
(\cdot,z) =0.
\end{equation*}

The interest of this type of initial data is that they are relevant
tools to investigate the problem of global wellposedness for~(NS3D).
First of all, they provide a class of large initial data for the
system~(NS3D) which are globally wellposed and which do not have
symmetries. Indeed, the following result holds (see\ccite{cg3},
Theorem~1 and Proposition~1.1).
\begin{thm}
\label{theoquasi2D}
{\sl Let~$v_0^{\rm h}=(v_0^1,v_0^2)$ be a  horizontal, smooth
 divergence free  vector field on~$\R^3$ (i.e.~$v_0^{\rm h}$ is  in~$L^2(\R^3)$ as well as all its derivatives),
 belonging, as well as all its derivatives, to~$L^2(\R_{z}; \dot H^{-1}(\Rh))$. Then, there exists a positive~$\e_0$ such that,  if~$\e\leq \e_0$, the initial data
$$
u_{0}^{\e} (x) = \bigl(v_0^{\rm h}(x_{\rm h},\e x_3),0\bigr)
$$
generates a unique, global   solution~$u^{\e}$ of~(NS3D).

Moreover, if~$v_0^{\rm h} (\xh,z) \eqdefa \wt v_0(\xh)g(z)$ then if $\e$ is small enough,
$$
\|u_0^\e\|_{\dot B^{-1}_{\infty,\infty}(\R^3)} \geq \frac 1 4 \|\wt
v_0^{\rm h}\|_{\dot B^{-1}_{\infty,\infty}(\Rh)}
\|g\|_{L^\infty(\Rv)} \quad\hbox{\rm where}\quad \ds\|a\|_{\dot
B^{-1}_{\infty,\infty} (\R^d)} \eqdefa \sup_{t>0} t^{\frac 12}
\|e^{t\Delta} a\|_{L^\infty}.
$$
}
\end{thm}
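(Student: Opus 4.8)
The plan is to regard (NS2.5D) as the slow, limiting dynamics and to construct the solution of (NS3D) as a perturbation of it. Rescaling the vertical variable by $y_3 = \e x_3$ turns the initial data $u_0^\e = (v_0^{\rm h}(x_{\rm h},\e x_3),0)$ into the $\e$-independent field $(v_0^{\rm h}(y_{\rm h},y_3),0)$ and converts the isotropic Laplacian into $\D_\e = \D_{\rm h} + \e^2\p_3^2$, so that the rescaled velocity solves (NS3D) with $\D$ replaced by $\D_\e$, the vertical convection carrying an extra factor $\e$ and the divergence constraint reading $\dive_{\rm h}U^{\rm h} + \e\,\p_3 U^3 = 0$. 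The natural approximate solution is then $(u^{\rm h}_\e,0)$, where $u^{\rm h}_\e$ is the global solution of (NS2.5D) with datum $v_0^{\rm h}$. I would first check that this field is exactly divergence free (because $\dive_{\rm h}u^{\rm h}_\e = 0$) and solves the horizontal momentum equations exactly with the two-dimensional pressure $p^{\rm h}_\e$; the sole defect is the residual $\e\,\p_3 p^{\rm h}_\e$ in the vertical equation, and this $O(\e)$ term is the origin of all the smallness.

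Next I would set $R = U - (u^{\rm h}_\e,0)$ and write down its evolution: a Navier--Stokes system linearized about $u^{\rm h}_\e$, driven by the forcing $\e\,\p_3 p^{\rm h}_\e$, with vanishing initial data and its own constraint $\dive_{\rm h}R^{\rm h}+\e\,\p_3 R^3=0$. The aim is to show $\|R\|\lesssim\e$ for all time once $\e\le\e_0$. On a bounded time interval this is routine, since the data vanish and the forcing is $O(\e)$; the entire difficulty is global control. This is exactly where the large-time decay estimate for (NS2.5D) established in the first part of the paper is used: it makes the norms of $u^{\rm h}_\e$ entering the coefficients of the linearized operator integrable in time, so that the transport and stretching terms built from $u^{\rm h}_\e$ can be absorbed and the forcing summed over $\R^+$. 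I would close the estimate by a fixed-point / continuation argument in a scale-invariant anisotropic Besov space (tolerating weaker regularity in the vertical direction), handling the bilinear term $R\otimes R$ and the mixed terms $R\otimes u^{\rm h}_\e$ by paraproduct decomposition. Uniqueness comes from the same contraction, giving a global unique $U = (u^{\rm h}_\e,0)+R$ and, undoing the rescaling, a global unique solution $u^\e$ of (NS3D).

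The hard part will be closing the remainder estimate globally in time. Without decay, the coupling terms $R\cdot\na u^{\rm h}_\e$ and $u^{\rm h}_\e\cdot\na R$ are controllable only on finite intervals and their contribution could accumulate and break smallness; the decay estimate is precisely what turns them into time-integrable quantities. The most delicate bookkeeping is to match the anisotropic regularities so that the vertical residual $\e\,\p_3 p^{\rm h}_\e$ lands in the space in which the fixed point is performed.

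For the lower bound I would exploit that both the heat flow and the $L^\infty$ norm respect the product structure $u_0^\e=\wt v_0^{\rm h}(x_{\rm h})\,g(\e x_3)$. Since $e^{t\D}$ factors as $e^{t\D_{\rm h}}$ in the horizontal variables and $e^{t\p_3^2}$ in the vertical one, and since a vertical rescaling gives $\|e^{t\p_3^2}[g(\e\,\cdot)]\|_{L^\infty(\Rv)}=\|e^{\e^2 t\p_3^2}g\|_{L^\infty(\Rv)}$, one obtains
\[
\|e^{t\D}u_0^\e\|_{L^\infty(\R^3)}=\|e^{t\D_{\rm h}}\wt v_0^{\rm h}\|_{L^\infty(\Rh)}\;\|e^{\e^2 t\p_3^2}g\|_{L^\infty(\Rv)}.
\]
I would then pick $t_0\in(0,\infty)$ nearly realizing the horizontal norm, namely $t_0^{1/2}\|e^{t_0\D_{\rm h}}\wt v_0^{\rm h}\|_{L^\infty}\ge\frac12\|\wt v_0^{\rm h}\|_{\dot B^{-1}_{\infty,\infty}(\Rh)}$; such a finite $t_0$ exists because $t^{1/2}\|e^{t\D_{\rm h}}\wt v_0^{\rm h}\|_{L^\infty}$ tends to $0$ both as $t\to0$ and as $t\to\infty$. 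For this fixed $t_0$ the vertical factor converges to $\|g\|_{L^\infty(\Rv)}$ as $\e\to0$, hence exceeds $\frac12\|g\|_{L^\infty(\Rv)}$ for $\e$ small enough. Evaluating the definition of the $\dot B^{-1}_{\infty,\infty}(\R^3)$ norm at $t_0$ and multiplying the two lower bounds yields the stated constant $\frac14$.
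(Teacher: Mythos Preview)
This theorem is not proved in the present paper; it is quoted from~\cite{cg3} (their Theorem~1 and Proposition~1.1). What the paper does contain is a brief description of the strategy of~\cite{cg3}: there the approximate solution is built from the genuinely two–dimensional system~(NS2D$_3$), i.e.\ the 2D Navier--Stokes equations with $z$ carried as a passive parameter, and the error one must absorb is $\wt F^\e=(\e^2\partial_z^2 u^{\rm h},\,\e\partial_z p^{\rm h})(t,x_{\rm h},\e x_3)$; see~\eqref{approximateb}. Your proposal does not follow this route. You take as approximate solution the solution of~(NS2.5D), so that the vertical diffusion is already part of the background and the residual reduces to the single term $(0,\e\partial_z p^{\rm h}_\e)$. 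This is precisely the new idea of the present paper, developed not for Theorem~\ref{theoquasi2D} but for Theorem~\ref{slowvarsimplifie}; the gain is a cleaner forcing, the price is that one needs the $\e$–uniform decay estimate of Theorem~\ref{estimfondNS2.5} to make the coupling coefficients time–integrable. Since the smoothness hypotheses of Theorem~\ref{theoquasi2D} are far stronger than those of Theorem~\ref{slowvarsimplifie}, your route does establish the first assertion, but it is not the proof the paper attributes to~\cite{cg3}. Note also that the paper closes the remainder estimate by an $\dot H^{\frac12}(\R^3)$ energy argument in the \emph{original} variables (Lemma~\ref{perturaNSanisolight}), where $R^\e$ is genuinely divergence free; your rescaled formulation forces the anisotropic constraint $\dive_{\rm h}R^{\rm h}+\e\partial_3 R^3=0$ and pushes you toward anisotropic Besov spaces, which is workable but unnecessarily heavier.

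For the lower bound, your factorization $\|e^{t\D}u_0^\e\|_{L^\infty}=\|e^{t\D_{\rm h}}\wt v_0^{\rm h}\|_{L^\infty(\Rh)}\,\|e^{\e^2 t\partial_3^2}g\|_{L^\infty(\Rv)}$ and the choice of a near–optimal $t_0$ are exactly right and match the argument of~\cite{cg3}. One small correction: your justification that a finite $t_0$ exists because $t^{1/2}\|e^{t\D_{\rm h}}\wt v_0^{\rm h}\|_{L^\infty}\to 0$ as $t\to\infty$ is not true in general (for $\wt v_0^{\rm h}$ merely in $L^2(\Rh)$ that quantity is only bounded). You do not need it: by definition of a supremum there is always a finite $t_0$ with $t_0^{1/2}\|e^{t_0\D_{\rm h}}\wt v_0^{\rm h}\|_{L^\infty}\ge\tfrac12\|\wt v_0^{\rm h}\|_{\dot B^{-1}_{\infty,\infty}(\Rh)}$, and the rest of your argument then goes through unchanged.
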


This last inequality ensures that the above global wellposedness
result is not a consequence of the Koch and Tataru theorem
(see\ccite{kochtataru}) which claims that if  the regular initial
data~$u_0$ of~(NS3D) is sufficiently small in the norm of the
space~${\rm BMO}^{-1}(\R^3),$ then
it generates a global smooth solution. Here, let us simply recall
that the space~${\rm BMO}^{-1}(\R^3)$ is continuously imbedded  in
the Besov space~$\dot B^{-1}_{\infty,\infty}(\R^3)$.

Moreover, such slowly varying initial data allows to say something
about the geometry  of the set~$\cG$ of initial data in~$\dot
H^{\frac12}(\R^3)$ which generates global solution  in the
space~$C(\R^+;\dot H^{\frac 12}(\R^3))$. In\ccite{gipcras}, I.
Gallagher, D. Iftimie and F. Planchon  proved
 that this set is open and connected (see also\ccite{adt} and\ccite{gip} for the same property in more sophisticated spaces).
  Using slowly varying perturbations,  I. Gallagher and the two authors proved in\ccite{cgz} that through
   any point of~$\cG$, there are uncountable   lines  of arbitrary length in the space~$\dot B^{-1}_{\infty,\infty}(\R^3),$
and thus in the Sobolev space~$\dot H^{\frac 12}(\R^3),$ which is
continuously imbedded in the space~$\dot
B^{-1}_{\infty,\infty}(\R^3)$. An interpretation in terms of support
of the Fourier transform of the initial data is presented
in\ccite{cgm}.

Such initial data appears also in the study of the problem
concerning the openness to the set~$\cG$  for weak topology
(see\ccite{bg} and\ccite{bcg2}).

  \medbreak
 The way Theorem\refer{theoquasi2D} is proved in\ccite{cg3}  is as follows.  Let us consider~$\uh(t,\xh,z)$ the (global) solution of the 2D incompressible Navier-Stokes system
 $$
{\rm (NS2D_{3})} \left\{
\begin{array}{c}
\partial_t \uh  + \uh  \cdot \nabla_h \uh  -\Delta_h  \uh  = -\nabla_h p^{\rm h} \quad \mbox{in} \:
\R^+ \times \R^2\\
\dive_h \uh  = 0\\
\uh |_{t=0} = \uh_0(\cdot ,z).
\end{array}
\right.
$$
This system is globally wellposed for any~$z$ in~$ \R$,  and the
solution is smooth in (two dimensional) space, and in time. Let us
define the approximate solution \ben \label{definvapp}
\wt{u}^\e_{\rm app}(t,x) = ( \uh(t,x_h,\e x_3) , 0 )\andf
  \wt{p}^\e_{\rm app}(t,x)  =   p^{\rm h}  (t,x_h,\e x_3).
  \een
  and let us search the solution of~(NS3D) as
$$
u^{\e} =  \wt{u}^\e_{\rm app} +  R^{\e} .
$$
 Classical computations leads to
\beq\label{approximateb}
\begin{split}
&\partial_t R^\e + R^\e \cdot \nabla R^\e -\Delta
R^\e+\wt{u}^\e_{\rm app} \cdot \nabla R^\e+R^\e \cdot \nabla
\wt{u}^\e_{\rm app} =
 \wt F^\e-\nabla q^\e \with\\
 &\qquad\qquad\qquad\qquad\qquad F^\e \eqdefa \bigl( \e^2\partial^2_z \uh, \e \partial_z  p^{\rm h}\bigr)  (t,x_h,\e x_3).
\end{split}
\eeq It is  easy to observe that, if we have good uniform estimates
on~$\wt{u}^\e_{\rm app}$ and that  ${\wt F_\e}$  tends to~$0$  when
$\e$ tends to~$0$ in a space like~$L^2(\R^+;\dot H^{-\frac
12}(\R^3))$, then the global wellposedness is proved (see\ccite{cg3}
for the details). Here, the fact that~$\e^2(\partial_z^2 \uh )
(t,\xh,\e x_3)$ appears as an error term is conceptually not
satisfactory because it is a term coming from the viscosity and thus it
is supposed to produce decay or regularity and yet here it is a
source of some technical difficulty.

\medbreak The idea here is to substitute ~${\rm (NS2D_{3})}$
by~(NS2.5D). We have to prove global wellposedness for ~(NS 2.5D)
with regular initial data, which is not difficult, and also the
space time estimate in~$L^p$, which should   of course be
independent of the parameter~$\e$. This is the new point of this
paper. The precise statement is the following.
\begin{thm}
\label{estimfondNS2.5} {\sl Let~$\uh_0$ and~$\nabla_{\rm h}\uh_0$ be
in~$L^2(\R^3)\cap L^\infty(\Rv;L^2(\Rh))$. Then~$\uh_0$ generates a
unique global solution to~(NS2.5D) in the space~$L^\infty_{\rm loc}
(L^4(\R^3))$.

Moreover, if in addition~$\uh_0$ belongs to~$L^\infty(\Rv;\dot
H^{-\delta} (\Rh))$ for some ~$\d$ in~$]0,1[$, then we have \ben
\nonumber
&\ds\int_0^{\infty} \|\nabla_{\rm h } \uh (t)\|_{L^\infty_{\rm v}(L^2(\Rh))}^2\,dt \leq  A_\d (\uh_0) \with\\
\label{estiomglobalfond}&A_\d (\uh_0) \eqdefa C_\d
\biggl(\frac { \|\nabla _{\rm h}\uh_0\|_{\LVLH}^2 \|\uh_0\|_{L^\infty_{\rm v} (\dot B^{-\d}_{2,\infty}(\Rh))}^{\frac 2 \delta}} {\|\uh_0\|_{\LVLH}^{\frac 2 \d}} + \|\uh_0\|_{\LVLH}^{2}\biggr)\\
\nonumber
& \qquad\qquad\qquad\qquad\qquad\qquad\qquad\qquad\qquad\qquad{}\times
\exp\big(C_\d\|\uh_0\|_{\LVLH}^2(1+\|\uh_0\|_{\LVLH}^2)\bigr)\,.
\een
}
\end{thm}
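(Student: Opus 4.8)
The plan is to regard (NS2.5D) as a family of two–dimensional Navier--Stokes systems indexed by the slow variable $z$, coupled only through the benign vertical diffusion $\e^2\p_3^2$, and to exploit the fact that this coupling satisfies a maximum principle in $z$. Taking the $L^2(\Rh)$ inner product of the momentum equation with $\uh(\cdot,z)$ at fixed $z$ and setting $e(t,z)=\|\uh(t,\cdot,z)\|_{L^2(\Rh)}^2$, the pressure and the (horizontally divergence free) convection term integrate to zero, leaving
\[
\tfrac12\p_t e+\|\nablah\uh(\cdot,z)\|_{L^2(\Rh)}^2+\e^2\|\p_3\uh(\cdot,z)\|_{L^2(\Rh)}^2=\tfrac{\e^2}{2}\p_3^2 e .
\]
Thus $e$ is a subsolution of the one–dimensional heat equation in $z$, so $\sup_z e$ is nonincreasing and $\|\uh(t)\|_{\LVLH}\le\|\uh_0\|_{\LVLH}$. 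The same computation applied to the horizontal vorticity $\om=\p_1 u^2-\p_2 u^1$, whose transport--diffusion equation again has a convection term that integrates to zero, gives the identical identity for $g(t,z)=\|\om(t,\cdot,z)\|_{L^2(\Rh)}^2=\|\nablah\uh(t,\cdot,z)\|_{L^2(\Rh)}^2$; hence $G(t):=\|\nablah\uh(t)\|_{\LVLH}^2$ is nonincreasing as well. Existence and uniqueness in $\bornlocva{\R^+}{L^4(\R^3)}$ then follow from the standard $2$-D theory together with the Ladyzhenskaya inequality $\|f(\cdot,z)\|_{L^4(\Rh)}^2\le C\|f(\cdot,z)\|_{L^2(\Rh)}\|\nablah f(\cdot,z)\|_{L^2(\Rh)}$: raising to the fourth power and integrating in $z$ yields $\|\uh\|_{L^4(\R^3)}^4\le C\|\nablah\uh\|_{\LVLH}^2\|\uh\|_{L^2(\R^3)}^2$, which the two monotonicity facts keep bounded.

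For the decay estimate I would run Wiegner's (Schonbek's) Fourier–splitting argument on each horizontal slice, uniformly in $z$. Splitting the horizontal frequency space at radius $\rho(t)^2=\b/(1+t)$, the high frequencies are controlled by $\rho^{-2}\|\nablah\uh(\cdot,z)\|_{L^2(\Rh)}^2$, while the low frequencies are estimated by comparison with the linear flow $e^{t\D_{\rm h}}\uh_0(\cdot,z)$, which decays like $t^{-\d}$ because $\uh_0(\cdot,z)\in\dot B^{-\d}_{2,\infty}(\Rh)$; Wiegner's bootstrap upgrades this to the full nonlinear solution, the nonlinear contribution being absorbed through Gronwall. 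Taking the supremum over $z$, where the vertical diffusion disappears by the maximum principle, turns the energy identity into
\[
N'(t)\le -2\rho(t)^2 N(t)+C\rho(t)^{2+2\d}M^2,\qquad N(t):=\|\uh(t)\|_{\LVLH}^2,
\]
with $M=\|\uh_0\|_{L^\infty_{\rm v}(\dot B^{-\d}_{2,\infty}(\Rh))}$. Choosing $\b\ge\d/2$ and integrating the weighted quantity $(1+t)^{2\b}N$ gives the decay $N(t)\le C_\d\,(N_0+M^2)(1+t)^{-\d}\,e^{C_\d\|\uh_0\|_{\LVLH}^2(1+\|\uh_0\|_{\LVLH}^2)}$, where $N_0=\|\uh_0\|_{\LVLH}^2$ and the exponential prefactor is exactly the Gronwall factor produced by the nonlinear bookkeeping.

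The final time integral I would then control by a Riccati argument for $G$. At the slice $z^*(t)$ realizing the supremum of $g$, the interpolation $\|\nablah f\|_{L^2(\Rh)}^2\le\|f\|_{L^2(\Rh)}\|\D_{\rm h}f\|_{L^2(\Rh)}$ together with $\|\D_{\rm h}\uh(\cdot,z^*)\|_{L^2(\Rh)}=\|\nablah\om(\cdot,z^*)\|_{L^2(\Rh)}$ (valid for divergence free fields) gives $G\le N^{1/2}\|\nablah\om(\cdot,z^*)\|_{L^2(\Rh)}$, while the enstrophy identity gives $\tfrac12 G'\le-\|\nablah\om(\cdot,z^*)\|_{L^2(\Rh)}^2$ at the same point; eliminating the dissipation yields the Riccati inequality $G'\le-cG^2/N$, i.e. $(1/G)'\ge c/N$. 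To recover the precise constant $A_\d$ I would split $\int_0^\infty G\,dt$ at the crossover time $T$ where $N_0=M^2T^{-\d}$, that is $T\simeq\|\uh_0\|_{L^\infty_{\rm v}(\dot B^{-\d}_{2,\infty}(\Rh))}^{2/\d}/\|\uh_0\|_{\LVLH}^{2/\d}$: on $[0,T]$ the monotonicity of $G$ gives $\int_0^T G\le T\,G(0)$, which is exactly the first term of $A_\d$; on $[T,\infty)$ the Riccati decay $G(t)\lesssim M^2 t^{-(1+\d)}$ yields $\int_T^\infty G\lesssim M^2 T^{-\d}\simeq\|\uh_0\|_{\LVLH}^2$, the second term, and the exponential factor is carried along from the previous step.

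The main obstacle is the nonlinear part of the decay step: running the Fourier splitting while keeping every constant independent of $\e$ and of $z$, and justifying the low–frequency comparison with the heat flow at the level of the full solution. The anisotropy ($L^\infty$ in $z$, $L^2$ in $x_{\rm h}$) forbids integrating the splitting in $z$ and forces it to be performed at the moving supremum point $z^*(t)$; it is precisely the sign of the vertical–diffusion term at a maximum that makes this legitimate. Verifying that the nonlinear interaction is genuinely controlled along Wiegner's bootstrap—rather than merely formally—and that the resulting Gronwall constant is exactly the stated exponential $\exp\bigl(C_\d\|\uh_0\|_{\LVLH}^2(1+\|\uh_0\|_{\LVLH}^2)\bigr)$ is the delicate accounting at the heart of the argument.
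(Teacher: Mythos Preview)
Your outline for global wellposedness via the maximum principle matches the paper exactly. The gap lies in the decay step. The differential inequality you write,
\[
N'(t)\le -2\rho(t)^2 N(t)+C\rho(t)^{2+2\d}M^2,\qquad \rho(t)^2=\frac{\b}{1+t},
\]
accounts only for the linear low-frequency part $\|e^{t\D_{\rm h}}\uh_0\|_{\LVLH}^2\lesssim t^{-\d}M^2$ and omits the nonlinear Duhamel contribution. Writing the Duhamel formula for $\uh$ and using Bernstein on the ball $\{|\xih|\le g(t)\}$, one has
\[
\|\uh_{\flat,g}(t)\|_{\LVLH}\;\le\;\|e^{t\D_{\rm h}}\uh_0\|_{\LVLH}+Cg^2(t)\int_0^t\|\uh(t')\|_{\LVLH}^2\,dt',
\]
so the right-hand side of the splitting inequality carries an extra term of order $g^6(t)\bigl(\int_0^t N\bigr)^2$. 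With the polynomial choice $g^2\sim(1+t)^{-1}$ and only the a priori bound $N\le N_0$, this extra term is of size $(1+t)^{-3}\cdot t^2 N_0^2\sim(1+t)^{-1}N_0^2$; after multiplication by the integrating factor $(1+t)^{2\b}$ and integration, it produces a contribution of order $N_0^2$ with \emph{no} decay. Gronwall does not help here because the nonlinearity appears through $\int_0^t N$, not through $N(t)$. This is not a matter of bookkeeping but the structural obstruction that forces the paper's two-step procedure: one first chooses a \emph{logarithmic} cutoff
\[
2g^2(t)=\frac{3}{T}\Bigl(e+\frac tT\Bigr)^{-1}\log^{-1}\Bigl(e+\frac tT\Bigr),
\]
for which the nonlinear term becomes integrable and yields the weak decay $N(t)\lesssim C(\uh_0)\log^{-2}(e+t/T)$; only then does a polynomial cutoff close, because the factor $\log^{-1}$ on $\|\uh\|$ makes the corresponding time integral convergent and allows a genuine Gronwall on the vorticity quantity $\Om_\d(t)=\sup_{t'\le t}(e+t'/T)^{1+\d/2}\|\om^{\rm h}(t')\|_{\LVLH}^2$.

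Your Riccati inequality $G'\le -2G^2/N$ is a legitimate and different idea for the second step (the paper instead runs the Fourier splitting again, on $\om^{\rm h}$). It does yield an integrable $G$ once some decay of $N$ is available---even the logarithmic decay suffices, since $(1/G)'\ge 2/N$ then gives $G(t)\lesssim C(\uh_0)\bigl(t\log^2(t/T)\bigr)^{-1}$ for $t\gg T$, and the split $\int_0^T G\le TG(0)$, $\int_{eT}^\infty G\lesssim C(\uh_0)$ recovers exactly the two pieces of $A_\d(\uh_0)$. But you must first obtain that decay of $N$, and your direct route to it does not go through. A smaller technical point: rather than working ``at the moving supremum $z^*(t)$'' (which requires justifying differentiability of a supremum), the paper derives the splitting inequality for the full function $z\mapsto\|U(t,\cdot,z)\|_{L^2_{\rm h}}^2$ as a subsolution of a forced heat equation in $z$ and then applies the maximum principle to that PDE; this cleanly produces the $L^\infty_{\rm v}$ estimate without pointwise arguments.
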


\begin{rmk} Following the procedure in Section \ref{sect3} and under the assumptions of Theorem
\ref{estimfondNS2.5}, we can prove more precise large time decay
estimates for $\uh(t)$ as follows \beno
\begin{split}
& \|\uh(t)\|_{L^\infty_{\rm v}(L^2_{\rm h})}^2\leq
Ce^{CC_0^2}\w{t}^{-\d}\with C_0\eqdefa \|u_0^{\rm
h}\|_{L^\infty_{\rm v}(L^2_{\rm h}\cap \dot{H}^{-\d}_{\rm
h})}\bigl(1+\|u_0^{\rm h}\|_{L^\infty_{\rm v}(L^2_{\rm h})}\bigr)\andf \\
&\|\na_{\rm h}\uh(t)\|_{L^\infty_{\rm v}(L^2_{\rm h})}^2\leq
CC_1\w{t}^{-(1+\d)}\with C_1\eqdefa \bigl(1+\|\na_{\rm h}u_0^{\rm
h}\|_{L^2}\bigr)e^{CC_0^2}.  \end{split} \eeno For a concise
presentation, we shall not present the details here.
\end{rmk}

The idea of the proof of this theorem is first  to perform energy
estimate in~$L^2(\R^3)$ which is very basic and far from being
enough here. Then we perform energy estimate in the horizontal
variables only for the vector field~$\uh$  and  its
vorticity~$\om^{\rm h}\eqdefa \partial_1u^2-\partial_2u^1$. The
maximum principle for the heat equation provides global
wellposedness of (NS2.5D). This is the purpose of the second
section.

Unfortunately, this global wellposedness results does not yield any
uniform bound for the solution with respect to~$\e$ in any ~$L^p$
norm for time. Thus we have no uniform global stability of such
global solutions in the sense that we want global stability  of the
global solutions of ~(NS2.5D) with the size of
 perturbation being independent of the parameter
$\e$.

In the third section, we introduce Wiegner's method in the context
of~(NS2.5D). This method has been introduced by M. Wiegner
in\ccite{Wiegner} in order to prove the large time decay estimate on
the~$L^2$ norm of a solution to the incompressible Navier-Stokes
system in the whole space.  For some developments and variations
about this questions of decay of the~$L^2$ norm in the whole space,
see the works\ccite{brandolese},\ccite{Schonbek}
and\ccite{Schonbek2}.

In the forth section, we apply Theorem\refer{estimfondNS2.5} to prove the following result.
\begin{thm}
\label{slowvarsimplifie} {\sl Let~$\uh_0$ be in~$H^1(\R^3)\cap
L^\infty(\Rv;\dot H^{-\delta} (\Rh))\cap L^\infty(\Rv;H^1(\Rh))$ for
some $\d\in]0,1[.$ Let us assume also~$\uh_0$ and~$\partial_z \uh_0$
belongs to~$L^2(\Rv;\dot H^{-\frac 12}\cap\dot H^{\frac 12} (\Rh))$.
Then for~$\e\leq \e_0$ depending only on the above norms, the
initial data
$$
u_{0,\e} (\xh, x_3) =\bigl( \uh_0(\xh, \e x_3),0\bigr)
$$
generates a global solution to~(NS3D) in the space~$C_b(\R^+;\dot
H^{\frac 12} (\R^3))\cap L^2(\R^+;\dot H^{\frac 32}(\R^3))$. }
\end{thm}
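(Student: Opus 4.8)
The plan is to realize the global solution of~(NS3D) with data~$u_{0,\e}$ as a perturbation of a rescaled~(NS2.5D) solution, the whole point being that passing from~${\rm (NS2D_3)}$ to~(NS2.5D) removes the worst error term in~\refeq{approximateb}. First I would solve~(NS2.5D) with data~$\uh_0$: the hypotheses~$\uh_0\in H^1(\R^3)\cap L^\infty(\Rv;H^1(\Rh))$ and~$\uh_0\in L^\infty(\Rv;\dot H^{-\d}(\Rh))\hookrightarrow L^\infty(\Rv;\dot B^{-\d}_{2,\infty}(\Rh))$ are exactly what Theorem\refer{estimfondNS2.5} requires, so it yields a global solution~$\uhe$ together with the $\e$-independent bound $\int_0^\infty\|\nablah\uhe(t)\|_{\LVLH}^2\,dt\leq A_\d(\uh_0)$. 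Then I would set $\vapp(t,x)=(\uhe(t,\xh,\e x_3),0)$ and $\wt p^\e_{\rm app}(t,x)=p^{\rm h}_\e(t,\xh,\e x_3)$. Since $\p_3^2\vapp=\e^2(\p_z^2\uhe)(t,\xh,\e x_3)$, one has $\D\vapp=(\D_\e\uhe)(t,\xh,\e x_3)$, so the vertical diffusion is now carried by the equation itself, and a direct computation gives $\p_t\vapp+\vapp\cdot\grad\vapp-\D\vapp+\grad\wt p^\e_{\rm app}=F^\e$ with $F^\e=(0,0,\e\,\p_z p^{\rm h}_\e)(t,\xh,\e x_3)$; compared with~\refeq{approximateb} the term~$\e^2\p_z^2\uh$ has disappeared. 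Writing $u^\e=\vapp+R^\e$, the remainder solves~\refeq{approximateb} with this smaller source~$F^\e$ and, since $u_{0,\e}=\vapp|_{t=0}$, with $R^\e|_{t=0}=0$.

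Next I would collect the two facts on which everything rests. On the one hand, the vertical rescaling $z\mapsto\e x_3$ leaves the $\LVLH$ norm invariant, so $\int_0^\infty\|\nablah\vapp(t)\|_{\LVLH}^2\,dt\leq A_\d(\uh_0)$ uniformly in~$\e$, while the vertical gradient $\p_3\vapp=\e(\p_z\uhe)(t,\xh,\e x_3)$ carries an explicit factor~$\e$. On the other hand, $F^\e$ is small: since $p^{\rm h}_\e=-\D_{\rm h}^{-1}\dv_{\rm h}\dv_{\rm h}(\uhe\otimes\uhe)$, its $z$-derivative is controlled by~$\p_z\uhe$, which I would estimate by differentiating~(NS2.5D) in~$z$ and using the hypotheses on~$\uh_0$ and~$\p_z\uh_0$ in~$L^2(\Rv;\dot H^{-\frac12}\cap\dot H^{\frac12}(\Rh))$; the same rescaling computation that turns $\|g(\cdot,\e\,\cdot)\|_{L^2(\R^3)}=\e^{-\frac12}\|g\|_{L^2(\R^3)}$ into a gain at negative regularity then yields $\|F^\e(t)\|_{\dot H^{-\frac12}(\R^3)}\lesssim\e^{\frac12}\|\p_z p^{\rm h}_\e(t)\|_{L^2(\Rv;\dot H^{-\frac12}(\Rh))}$, hence $\int_0^\infty\|F^\e(t)\|_{\dot H^{-\frac12}}^2\,dt=O(\e)$.

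With these in hand I would run the energy estimate for~$R^\e$ in the critical space $C_b(\R^+;\dot H^{\frac12})\cap L^2(\R^+;\dot H^{\frac32})$. Testing~\refeq{approximateb} in~$\dot H^{\frac12}$ gives, schematically, $\frac12\frac{d}{dt}\|R^\e\|_{\dot H^{\frac12}}^2+\|R^\e\|_{\dot H^{\frac32}}^2$ equal to the forcing term $\langle F^\e,R^\e\rangle_{\dot H^{\frac12}}$, bounded by $\frac14\|R^\e\|_{\dot H^{\frac32}}^2+C\|F^\e\|_{\dot H^{-\frac12}}^2$, the genuinely nonlinear term $\langle R^\e\cdot\grad R^\e,R^\e\rangle_{\dot H^{\frac12}}\lesssim\|R^\e\|_{\dot H^{\frac12}}\|R^\e\|_{\dot H^{\frac32}}^2$, absorbed by the viscosity while $\|R^\e\|_{\dot H^{\frac12}}$ stays small, and the two cross terms $\langle\vapp\cdot\grad R^\e,R^\e\rangle_{\dot H^{\frac12}}$ and $\langle R^\e\cdot\grad\vapp,R^\e\rangle_{\dot H^{\frac12}}$. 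Using that $\vapp$ is divergence free (so the $L^2$ pairing drops out and only commutators survive) and that its horizontal gradient is the only large piece, I would bound both cross terms by $C\|\nablah\vapp\|_{\LVLH}^2\|R^\e\|_{\dot H^{\frac12}}^2+\frac14\|R^\e\|_{\dot H^{\frac32}}^2$ up to an $O(\e)$ contribution from~$\p_3\vapp$. A Gronwall argument with weight $\eta(t)=C\|\nablah\vapp(t)\|_{\LVLH}^2$, of integral $\leq CA_\d(\uh_0)$, then gives $\sup_t\|R^\e(t)\|_{\dot H^{\frac12}}^2+\int_0^\infty\|R^\e\|_{\dot H^{\frac32}}^2\,dt\leq C\,O(\e)\,\exp(CA_\d(\uh_0))$, so for $\e\leq\e_0$ the remainder is small; a continuation argument upgrades this into a global solution and $u^\e=\vapp+R^\e$ lies in the announced space.

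The main obstacle is the control of the two cross terms at the critical regularity~$\dot H^{\frac12}$. A naive isotropic product law would ask for $\grad\vapp$ in a norm we do not control uniformly in~$\e$; the point is to decompose along horizontal and vertical Littlewood-Paley blocks and to place the large factor $\nablah\vapp$ in~$\LVLH$, which is precisely the quantity made $\e$-uniform by Theorem\refer{estimfondNS2.5}, while $\p_3\vapp$ is kept harmless by its explicit factor~$\e$. Balancing the gain~$\e^{\frac12}$ in~$F^\e$ against the loss~$\e^{-\frac12}$ produced by the vertical rescaling, so that a net positive power of~$\e$ survives, is the other delicate point, and it is exactly what forces the hypotheses on~$\uh_0$ and~$\p_z\uh_0$ in~$L^2(\Rv;\dot H^{-\frac12}\cap\dot H^{\frac12}(\Rh))$.
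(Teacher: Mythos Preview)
Your overall architecture is exactly the paper's: solve~(NS2.5D), rescale to build~$u^\e_{\rm app}$, write the remainder equation~\refeq{approximatea}, show~$\|F^\e\|_{L^2(\R^+;\dot H^{-\frac12})}\lesssim\e^{\frac12}$, and close by an~$\dot H^{\frac12}$ energy estimate plus Gronwall and continuation. The treatment of~$F^\e$ and of the vertical piece~$\partial_3\vapp$ is also correct in spirit.

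The gap is in the cross term~$(\vapp\cdot\nabla R^\e\,|\,R^\e)_{\dot H^{\frac12}}$. Your sentence ``$\vapp$ is divergence free, so the $L^2$ pairing drops out and only commutators survive'' is misleading: the cancellation~$(v\cdot\nabla w\,|\,w)_{L^2}=0$ does not help at the~$\dot H^{\frac12}$ level, and the resulting commutator~$[\Lambda^{1/2},\vapp]\cdot\nabla R^\e$ cannot be controlled by~$\|\nablah\vapp\|_{\LVLH}$ alone---you would need an isotropic quantity like~$\|\nabla\vapp\|_{L^3}$, which is not $\e$-uniform. The paper handles this term differently: it simply bounds
\[
\bigl|(\vapp\cdot\nabla R^\e\,|\,R^\e)_{\dot H^{\frac12}}\bigr|\leq \|\vapp\|_{L^\infty(\R^3)}\|\nabla R^\e\|_{L^2}^2
\]
and therefore needs~$\int_0^\infty\|\vapp(t)\|_{L^\infty(\R^3)}^2\,dt$ to be finite uniformly in~$\e$. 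This $L^\infty$ bound is \emph{not} a byproduct of Theorem\refer{estimfondNS2.5}; it requires a separate propagation lemma (Lemma\refer{progaanisoelem} and Corollary\refer{estimuappco}) that propagates~$\uhe$ and~$\partial_z\uhe$ in~$L^2(\Rv;\dot H^{\pm\frac12}(\Rh))$ and then uses the one-dimensional inequality~$\|a\|_{L^\infty(\Rv;\cH)}^2\leq 2\|a\|_{L^2(\Rv;\cH)}\|\partial_z a\|_{L^2(\Rv;\cH)}$ together with the 2D embedding~$\|a\|_{L^\infty(\Rh)}^2\lesssim\|a\|_{\dot H^{\frac12}}\|\nablah a\|_{\dot H^{\frac12}}$. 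So the hypotheses~$\uh_0,\partial_z\uh_0\in L^2(\Rv;\dot H^{-\frac12}\cap\dot H^{\frac12}(\Rh))$ are needed \emph{twice}: once for~$F^\e$ (as you wrote) and once for the~$L^\infty$ control of~$\vapp$ in the Gronwall weight. Your weight~$\eta(t)=C\|\nablah\vapp(t)\|_{\LVLH}^2$ is too optimistic; the correct weight is the full~$N(\vapp(t))=\|\vapp(t)\|_{L^\infty}^2+\|\nablah\vapp(t)\|_{\LVLH}^2+\|\partial_3\vapp(t)\|_{L^2_{\rm v}(\dot H^{\frac12}_{\rm h})}^2$ of Lemma\refer{perturaNSanisolight}.
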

The idea of the proof of this theorem is to search a solution
of~(NS3D) as
$$
u^{\e} =  u^{\e}_{\rm app} +  R^{\e} \with u^\e_{\rm app}
(t,\xh,x_3)\eqdefa (\uh_\e(t,\xh,\e x_3),0)
$$
where~$(\uh_\e,p^{\rm h}_\e)$ is the solution of~(NS2.5D) with
initial data $\uh_0.$
 Classical computations leads to
\beq\label{approximatea}
\begin{split}
&\partial_t R^\e + R^\e \cdot \nabla R^\e -\Delta R^\e+u^\e_{\rm
app} \cdot \nabla R^\e+R^\e \cdot \nabla u^\e_{\rm app} =
 F^\e-\nabla q^\e \with\\
 &\qquad\qquad\qquad\qquad\qquad F^\e \eqdefa \bigl( 0, \e \partial_z  p^{\rm h}_\e\bigr)  (t,x_h,\e x_3).
\end{split}
\eeq The external force~$F^\e$ in \eqref{approximatea} is much
easier to be dealt with than the external force~$\wt F^\e$ in
\eqref{approximateb}.

\setcounter{equation}{0}
\section{Global wellposedness of~(NS2.5D) and maximum principle}

Let us first observe that the general theory of parabolic system
implies that, for any positive~$\e,$ a unique maximal
solution~$\uh_\e$ to~(NS2.5D) exists
in~$C([0,T^\star_\e[;H^1(\R^3))$ and that \beq \label{blowupNStype}
\mbox{if}\ \  T^\star_\e<\infty \ \Longrightarrow \  \forall p>3\,,\
\lim_{t\rightarrow T^\star_\e} \|\uh_\e\|_{L^p(\R^3)} =\infty. \eeq
 All the forthcoming computations will be valid for~$t$ less than~$T^\star_\e$. For simplicity, we shall
  drop  out the subscript $\e$ in what follows.

 Multiplying~(NS2.5D) by $\uh$ and then integrating the resulting
equation over $\Rh$ gives \beq
 \label{1.3c}
\f12\Bigl(\f{d}{dt}\|\uh(t,\cdot,z)\|_{L^2_{\rm
h}}^2-\e^2\p_3^2\|\uh(t,\cdot,z)\|_{L^2_{\rm h}}^2\Bigr)+\|\na_\e
\uh(t,\cdot,z)\|_{L^2_{\rm h}}^2=0, \eeq where $\na_\e=(\na_{\rm
h},\e\p_3)$. By integration \eqref{1.3c} for both time and  the
vertical variable~$z$, we get \beq \label{1.4a}
\f12\|\uh(t)\|_{L^2(\R^3)}^2+\int_0^t\|\na_\e
\uh(t')\|_{L^2(\R^3)}^2=\f12\|u_0^{\rm h}\|_{L^2(\R^3)}^2. \eeq
Moreover, from\refeq{1.3c}, we infer
$$
\f{d}{dt}\|\uh(t,\cdot,z)\|_{L^2_{\rm
h}}^2-\e^2\p_3^2\|\uh(t,\cdot,z)\|_{L^2_{\rm h}}^2\leq 0.
$$
The fact  that the heat flow is a contraction  in $L^p$ space implies that
\beq
\label{1.4}
\forall p\in [2,\infty]^2\,,\  \|\uh(t)\|_{L^p_{\rm v}(L^2_{\rm h})}\leq \|u_0^{\rm h}\|_{L^p_{\rm v}(L^2_{\rm h})}.
 \eeq
 There is no evidence that Equality\refeq{1.3c} provides an estimate of
 $$
 \sup_z \int_0^t \|\na_h\uh(t,\cdot,z)\|_{L^2_h}^2 dt'
 $$
 which is independent of~$\e$ for small~$\e$. Of course it is the case when~$\e=0$. This shows that the system (NS2.5D)
  is really a singular perturbation problem.

Because the nonlinear term in (NS2.5D) is a two dimensional one, we
have the following well-known equation on the vorticity~$\om^{\rm
h}=\p_1u^2-\p_2u^1$: \beq \label{1.2} \p_t\om^{\rm
h}+\uh\cdot\na_{\rm h}\om^{\rm h}-\D_\e\om^{\rm h}=0. \eeq Arguing
in the same way as the  above, we get \beq \label{1.3a}
\f12\Bigl(\f{d}{dt}\|\om^{\rm h}(t,\cdot,z)\|_{L^2_{\rm
h}}^2-\e^2\p_3^2\|\om^{\rm h}(t,\cdot,z)\|_{L^2_{\rm
h}}^2\Bigr)+\|\na_{\e}\om^{\rm h}(t,\cdot,z)\|_{L^2_{\rm h}}^2=0
\eeq which leads us to \beq \label{1.3b} \forall p\in
[2,\infty]^2\,,\  \
 \|\om^{\rm h}(t)\|_{L^p_{\rm v}(L^2_{\rm h})}\leq \|\om_0^{\rm h}\|_{L^p_{\rm v}(L^2_{\rm h})}
\eeq Now let us see that Inequalities\refeq{1.4} and \refeq{1.3b}
prevent the solution of (NS2.5D) from blowing up. Indeed,  by
interpolation between these two inequalities, we obtain
$$
\forall p\in ]2,\infty[\,,\  \|\uh(t)\|_{L^p(\R^3)} \leq  C_p \leq
\|u_0^{\rm h}\|^{\frac 2 p} _{L^p_{\rm v}(L^2_{\rm h})}\|\om_0^{\rm
h}\|^{1- \frac 2 p}_{L^p_{\rm v}(L^2_{\rm h})}.
$$
Assertion\refeq{blowupNStype} ensures that~$T^\star_\e$ is infinite.

\setcounter{equation}{0}
\section{Singular perturbation of M. Wiegner' method }\label{sect3}

Let us first recall Wiegner' method in \cite{Wiegner}. It consits in
truncating the frequency space with an appropriate time dependent
function. Given a positive function $g$ on~$\R^+,$  we define, for
a~$L^2$ function $a$ on~$\Rh$
$$
a_{\flat, g} (t) \eqdefa \cF_{\Rh}^{-1} \bigl(\1 _{S(t)} (\xi_{\rm h}) \wh a(\xi_{\rm h})\bigr)
\with S(t)\eqdefa \{\xi_{\rm h} \in \Rh\,/\ Ê|\xi_{\rm h} |\leq g(t)\}.
$$
The key lemma here is the following.
\begin{lem}
\label{Wiegnersingperturb} {\sl Let~$U$ be a regular function
on~$\R^+\times \Rh\times \Rv$ such that \beq
\label{Wiegnersingperturbeq1}
\partial_t \|U(t,\cdot,z)\|_{L^2_{\rm h}}^2-\e^2\partial_z^2 \|U(t,\cdot,z)\|_{L^2_{\rm h}}^2 +
2  \|\nabla_{\rm h} U(t,\cdot,z)\|_{L^2_{\rm h}}^2 \leq 0.
\eeq
Then for any positive function~$g$ on~$\R^+$, we have
$$
\longformule{ \|U(t)\|_{L^\infty_{\rm v} (L^2_{\rm h})}^2 \exp\Bigl
(2\int_0^t g^2(t')dt'\Bigr) \leq  \|U(0)\|_{L^\infty_{\rm v}
(L^2_{\rm h})}^2 } { {}+C \int_0^t  \|U_{\flat,g}
(t')\|_{L^\infty_{\rm v} (L^2_{\rm h})}^2  g^2(t') \exp\Bigl
(2\int_0^{t'} g^2(t'')dt''\Bigr) dt'\,. }
$$
}
\end{lem}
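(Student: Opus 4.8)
The plan is to reduce the stated estimate to a maximum principle for the one-dimensional heat equation in the slow variable~$z$, the horizontal dissipation being converted, via a Littlewood--Paley frequency splitting, into a favourable zeroth-order term. First I would set $f(t,z) \eqdefa \|U(t,\cdot,z)\|_{L^2_{\rm h}}^2$ and rewrite the horizontal dissipation by Plancherel in the horizontal variable, $\|\nabla_{\rm h} U(t,\cdot,z)\|_{L^2_{\rm h}}^2 = \int_{\Rh} |\xi_{\rm h}|^2 |\wh U(t,\xi_{\rm h},z)|^2\, d\xi_{\rm h}$. Splitting this integral at the threshold $|\xi_{\rm h}| = g(t)$ and retaining only the high-frequency part, where $|\xi_{\rm h}|^2 \geq g^2(t)$, gives
$$
\|\nabla_{\rm h} U(t,\cdot,z)\|_{L^2_{\rm h}}^2 \geq g^2(t)\bigl(f(t,z) - \|U_{\flat,g}(t,\cdot,z)\|_{L^2_{\rm h}}^2\bigr),
$$
since $\int_{|\xi_{\rm h}| > g(t)} |\wh U|^2\, d\xi_{\rm h} = f(t,z) - \|U_{\flat,g}(t,\cdot,z)\|_{L^2_{\rm h}}^2$ by definition of the low-frequency truncation $U_{\flat,g}$. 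Inserting this into hypothesis \eqref{Wiegnersingperturbeq1} yields the scalar differential inequality
$$
\partial_t f - \e^2 \partial_z^2 f + 2 g^2(t)\, f \leq 2 g^2(t)\, \|U_{\flat,g}(t,\cdot,z)\|_{L^2_{\rm h}}^2.
$$

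Next I would absorb the zeroth-order term with the integrating factor $e^{2G(t)}$, where $G(t) \eqdefa \int_0^t g^2(t')\,dt'$. Setting $h(t,z) \eqdefa e^{2G(t)} f(t,z)$, a direct computation gives $\partial_t h - \e^2 \partial_z^2 h \leq F$ with $F(t,z) \eqdefa 2 g^2(t) e^{2G(t)} \|U_{\flat,g}(t,\cdot,z)\|_{L^2_{\rm h}}^2 \geq 0$. This is a heat inequality in the single variable~$z$ with a nonnegative source, so the problem is now purely scalar.

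Finally I would invoke the comparison principle: letting $w$ solve $\partial_t w - \e^2 \partial_z^2 w = F$ with $w|_{t=0} = h|_{t=0}$, the difference $h - w$ satisfies a homogeneous heat inequality with zero initial data, hence $h \leq w$. Writing $w$ through Duhamel's formula against the heat kernel $\Gamma_\e(t,\cdot)$ of $\partial_t - \e^2 \partial_z^2$, and using that $\Gamma_\e(t,\cdot)$ is a probability density — so that convolution with it is a contraction on $L^\infty_{\rm v}$, exactly the property already exploited in \eqref{1.4} — I would take the supremum over~$z$ to obtain
$$
\|h(t)\|_{L^\infty_{\rm v}} \leq \|h(0)\|_{L^\infty_{\rm v}} + \int_0^t \|F(t')\|_{L^\infty_{\rm v}}\, dt'.
$$
Since $\|h(t)\|_{L^\infty_{\rm v}} = e^{2G(t)} \|U(t)\|_{L^\infty_{\rm v}(L^2_{\rm h})}^2$ and $G(0)=0$, this is precisely the asserted inequality with $C=2$.

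The main obstacle I anticipate is the passage from the scalar heat inequality to the $L^\infty_{\rm v}$ bound: one must justify the comparison principle on the whole line $\Rv$, which relies on the regularity and boundedness of $f$ guaranteed by the hypotheses, and — more conceptually — exploit that the $\e$-dependent vertical diffusion is harmless because the heat semigroup is an $L^\infty_{\rm v}$-contraction \emph{uniformly} in~$\e$. The frequency splitting and the integrating factor are routine; the real point is that the vertical dissipation never degrades the estimate, which is exactly what makes the bound uniform in the singular parameter~$\e$.
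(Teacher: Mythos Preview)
Your proof is correct and follows essentially the same route as the paper's: the same frequency splitting at $|\xi_{\rm h}|=g(t)$ to convert the horizontal dissipation into the inequality $\partial_t f - \e^2\partial_z^2 f + 2g^2(t)f \leq 2g^2(t)\|U_{\flat,g}(t,\cdot,z)\|_{L^2_{\rm h}}^2$, then the integrating factor $e^{2G(t)}$, then the maximum principle for the one-dimensional heat operator in~$z$. The only cosmetic difference is that you spell out the maximum-principle step via Duhamel and the $L^\infty_{\rm v}$-contraction of the heat kernel, whereas the paper simply invokes the maximum principle after first replacing the source by its $L^\infty_{\rm v}$ bound; both yield the same inequality with $C=2$.
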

\begin{proof}
Let us write that
\beno
 \|\nabla_{\rm h} U(t,\cdot,z)\|_{L^2_{\rm h}}^2 & = &
 (2\pi)^{-2} \int_{\Rh} |\xi_{\rm h} |^2 |\wh U(t,\xi_{\rm h},z)|^2d\xih\\
 & \geq &
 (2\pi)^{-2} \int_{\{\xih\in \Rh\,/\ |\xih|\geq g(t)\}} |\xi_{\rm h} |^2 |\wh U(t,\xi_{\rm h},z)|^2d\xih\\
 &\geq &
 (2\pi)^{-2}g^2(t)  \int_{\Rh}  |\wh U(t,\xi_{\rm h},z)|^2d\xih\\
 &&\qquad\qquad\qquad\qquad\qquad
 -(2\pi)^{-2}g^2(t)  \int_{ \Rh} |\wh U_{\flat,g}(t,\xi_{\rm h},z)|^2d\xih\,.
\eeno Plugging this inequality in
Hypothesis\refeq{Wiegnersingperturbeq1} gives
$$
\partial_t \|U(t,\cdot,z)\|_{L^2_{\rm h}}^2-\e^2\partial_z^2 \|U(t,\cdot,z)\|_{L^2_{\rm h}}^2 +
2  g^2(t) \|U(t,\cdot,z)\|_{L^2_{\rm h}}^2 \leq 2g^2(t)  \|U_{\flat,g}(t,\cdot,z)\|_{L^2_{\rm h}}^2.
$$
The multiplication by~$\ds \exp\Bigl( 2\int_0^t g^2(t')dt'\Bigr)$
gives rise to
$$
\longformule{
\partial_t \biggl( \|U(t,\cdot,z)\|_{L^2_{\rm h}}^2 \exp\Bigl( 2\int_0^t g^2(t')dt'\Bigr)\biggr)
-\e^2\partial_z^2 \biggl( \|U(t,\cdot,z)\|_{L^2_{\rm h}}^2
\exp\Bigl( 2\int_0^t g^2(t')dt'\Bigr)\biggr) } { {}\leq 2
\|U_{\flat,g}(t)\|_{L^\infty_{\rm v}(L^2_{\rm h})}^2\, g^2(t)
\exp\Bigl( 2\int_0^t g^2(t')dt'\Bigr). }
$$
The maximum principle implies the lemma.
\end{proof}

In order to apply this lemma  with~$U=u^{\rm h}$ and $U=\om^{\rm
h}$, we need some control about low frequency part of~$u^{\rm h}$
and ~$\om^{\rm h}$.
\begin{lem}
\label{nonlinearbf} {\sl If~$\uh$ is a regular solution of~(NS2.5D),
then we have, for any positive function~$g$, \beno \|\uh_{\flat,
g}(t)\|_{L^\infty_{\rm v} (L^2_{\rm h})} &\leq & \|e^{t\Delta_{\rm
h}} \uh_0\|_{\LVLH}
+ C g^2(t) \int_0^t \|\uh(t')\|^2_{\LVLH} \, dt' \andf\\
\|\om_{\flat, g}^{\rm h}(t)\|_{L^\infty_{\rm v} (L^2_{\rm h})}  &
\leq & g(t)\|e^{t\Delta_{\rm h}} \uh_0\|_{\LVLH} + C g^2(t) \int_0^t
\|\uh(t')\|_{\LVLH}\|\om^{\rm h}(t')\|_{L^\infty_{\rm v} (L^2_{\rm
h})} \, dt'. \eeno }
\end{lem}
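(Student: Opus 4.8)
The plan is to represent both $\uh$ and $\om^{\rm h}$ through Duhamel's formula for the heat semigroup $e^{t\D_\e}$ and then to exploit the fact that the horizontal cut-off $(\cdot)_{\flat,g}$ is, at each fixed time~$t$, nothing but the Fourier multiplier by $\1_{\{|\xih|\le g(t)\}}$ acting on the horizontal variable. In particular it commutes with $e^{t\D_\e}$, with the horizontal Leray projector $\P_{\rm h}\eqdefa\Id-\na_{\rm h}\D_{\rm h}^{-1}\dive_{\rm h}$ and with $\dive_{\rm h}$, and since it acts only on $\xih$ it leaves the vertical heat flow $e^{t\e^2\p_3^2}$ untouched. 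First I would apply $\P_{\rm h}$ to~(NS2.5D), which gives $\pa_t\uh-\D_\e\uh=-\P_{\rm h}\dive_{\rm h}(\uh\otimes\uh)$, so that
\[
\uh(t)=e^{t\D_\e}\uh_0-\int_0^t e^{(t-t')\D_\e}\P_{\rm h}\dive_{\rm h}(\uh\otimes\uh)(t')\,dt',
\]
and likewise, rewriting $\uh\cdot\na_{\rm h}\om^{\rm h}=\dive_{\rm h}(\uh\om^{\rm h})$ in\refeq{1.2},
\[
\om^{\rm h}(t)=e^{t\D_\e}\om^{\rm h}_0-\int_0^t e^{(t-t')\D_\e}\dive_{\rm h}(\uh\om^{\rm h})(t')\,dt'.
\]

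For the linear contributions I would factor $e^{t\D_\e}=e^{t\D_{\rm h}}e^{t\e^2\p_3^2}$ and use that $e^{t\e^2\p_3^2}$ is a contraction on $L^\infty_{\rm v}$ (heat flow in~$z$) while preserving $L^2_{\rm h}$; since the multiplier $\1_{\{|\xih|\le g(t)\}}$ has operator norm $1$ on $L^2_{\rm h}$, the $\uh$-linear term is at once bounded by $\|e^{t\D_{\rm h}}\uh_0\|_{\LVLH}$, which is the first term on the right-hand side. For the vorticity, the extra factor $g(t)$ comes from the pointwise bound $|\wh{\om^{\rm h}_0}(\xih)|\le|\xih|\,|\wh{\uh_0}(\xih)|\le g(t)\,|\wh{\uh_0}(\xih)|$, valid on the support $\{|\xih|\le g(t)\}$, which after Plancherel and the supremum in~$z$ yields $g(t)\|e^{t\D_{\rm h}}\uh_0\|_{\LVLH}$.

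The nonlinear contributions are where the two powers $g^2(t)$ are produced. Pulling $\1_{\{|\xih|\le g(t)\}}$ inside the time integral, the symbol $i\xih$ of $\dive_{\rm h}$ is controlled by $g(t)$ on that support, giving one factor of $g(t)$ once the contractivity of $e^{(t-t')\D_\e}$ on $\LVLH$ is used. The second factor comes from a two-dimensional Bernstein inequality on the horizontal ball: for a horizontal function $P$ one has $\|P_{\flat,g}(\cdot,z)\|_{L^2_{\rm h}}\le C g(t)\,\|P(\cdot,z)\|_{L^1_{\rm h}}$, and with $P=\uh\otimes\uh$ (resp. $P=\uh\om^{\rm h}$) the Cauchy--Schwarz bound $\|P(\cdot,z)\|_{L^1_{\rm h}}\le\|\uh(\cdot,z)\|_{L^2_{\rm h}}^2$ (resp. $\le\|\uh(\cdot,z)\|_{L^2_{\rm h}}\|\om^{\rm h}(\cdot,z)\|_{L^2_{\rm h}}$), followed by the supremum in~$z$, gives the stated integrands. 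Combining these produces exactly $C g^2(t)\int_0^t\|\uh(t')\|_{\LVLH}^2\,dt'$ and $C g^2(t)\int_0^t\|\uh(t')\|_{\LVLH}\|\om^{\rm h}(t')\|_{\LVLH}\,dt'$ respectively.

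The main obstacle, and the only genuinely delicate point of bookkeeping, is that the cut-off sits at the frequency $g(t)$, the \emph{outer} time, rather than at $g(t')$: this is precisely what lets $g^2(t)$ be pulled out of the time integral, and it is what makes the bound usable when one feeds it into Lemma\refer{Wiegnersingperturb}. Everything else is a routine combination of Plancherel's identity, the contractivity of the horizontal and vertical heat semigroups in the relevant norms, and the two-dimensional Bernstein inequality.
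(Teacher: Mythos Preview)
Your proposal is correct and follows essentially the same route as the paper: Duhamel's formula for~$\uh$ and~$\om^{\rm h}$ with respect to the semigroup~$e^{t\D_\e}$, contractivity of the vertical heat flow on~$L^\infty_{\rm v}$ to reduce to~$e^{t\D_{\rm h}}$, and horizontal Bernstein inequalities on the ball~$\{|\xih|\le g(t)\}$ to extract the factors~$g(t)$ and~$g^2(t)$. The paper packages the first two steps into the single intermediate inequality\refeq{nonlinearbfdemoeq1} for a generic solution of~$\partial_t a-\D_\e a=f$, while you carry out the factorization~$e^{t\D_\e}=e^{t\D_{\rm h}}e^{t\e^2\partial_3^2}$ explicitly; the substance is identical.
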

\begin{proof}
It is in fact a lemma about the heat equation with vanishing
diffusion in one direction. Let us consider~$a$ and~$f$   such
that~$\partial_t a -\D_\e a  =f .$ By definition of~$\Delta_\e$,
Duhamel's formula writes, after a Fourier transform with respect to
the horizontal variables,
$$
\longformule{ \wh a_{\flat, g} (t,\xih,z) = \1
_{S(t)}(\xih)e^{-t|\xih|^2} \frac 1 {(4\pi\e^2 t)^{\frac 12}}
\int_{\Rv} e^{-\frac {|z-z'|^2} {4\e^2t}} \wh a(0,\xih, z')dz' } {
{}+ \int_0^t \int_\Rv e^{-(t-t')|\xih|^2} \frac 1 {(4\pi\e^2
(t-t'))^{\frac 12} }
 e^{-\frac {|z-z'|^2} {4\e^2(t-t')}}  \1 _{S(t)}(\xih)\wh f (t',\xih,z')\,dz'dt'.
 }
$$
As the norm of an integral is less than or equal to the integral of the norm, we get, for any~$(t,z)$ in~$ \R^+\times\Rv$,
$$
\longformule{
\|a_{\flat, g} (t,\cdot,z)\|_{L^2_{\rm h}} \leq  \frac 1 {(4\pi\e^2 t)^{\frac 12}} \int_{\Rv} e^{-\frac {|z-z'|^2} {4\e^2t}} \bigl\|\bigl(e^{t\Delta_{\rm h}} a(0,\cdot, z')\bigr)_{\flat ,g}\bigr\|_{L^2_{\rm h}}\,  dz'
}
{
{}+
\int_0^t \int_\Rv  \frac 1 {(4\pi\e^2 (t-t'))^{\frac 12} }
 e^{-\frac {|z-z'|^2} {4\e^2(t-t')}}  \|\1 _{S(t)}(D_{\rm h}) f (t',\cdot,z')\|_{L^2_{\rm h}}\,
 dz'dt',
 }
$$ where $\1 _{S(t)}(D_{\rm h})$ denotes the Fourier multiplier with
$$
\1 _{S(t)}(D_{\rm h})g(x_{\rm h})=\cF^{-1}\bigl(\1
_{S(t)}(\xih)\widehat{g}(\xih)\bigr)(x_{\rm h}).
$$
Taking the $L^\infty$ norm with respect to the variable~$z$ gives \beq
\label{nonlinearbfdemoeq1} \|a_{\flat, g} (t)\|_{\LVLH} \leq
\bigl\|\bigl(e^{t\Delta_{\rm h}} a(0)\bigr)_{\flat
,g}\bigr\|_{\LVLH} + \int_0^t   \|\1 _{S(t)}(D_{\rm h}) f
(t')\|_{\LVLH} \,dt'. \eeq Using  Berstein inequality  in the
horizontal variables gives \beno
\| \bigl(e^{t\Delta_{\rm h}} \om_0^{\rm h}\bigr)_{\flat ,g}\|_{\LVLH}  & \lesssim &  g(t)  \|e^{t\Delta_{\rm h}} \uh_0\|_{\LVLH} \\
\|\1 _{S(t)}(D) \dive_{\rm h}( \uh(t')\otimes \uh(t')) \|_{\LVLH} &
\lesssim  &
g^2(t)  \|\uh(t')\|^2_{\LVLH}\andf\\
\bigl\|\1 _{S(t)}(D) \dive_{\rm h}(\om^{\rm h}(t') \uh(t'))
\bigr\|_{\LVLH} & \lesssim  & g^2(t)  \|\uh(t')\|_{\LVLH} \|\om^{\rm
h}(t')\|_{\LVLH}\,. \eeno This together with the fact that \beno
\uh(t)=e^{t\D_\e}\uh_0-\int_0^te^{(t-t')\D_\e}{\Bbb P}^{\rm
h}\dive_{\rm h}(\uh\otimes\uh)(t')\,dt' \eeno and \eqref{1.2}
implies the required inequalities, where ${\Bbb P}^{\rm
h}=I-\na_{\rm h}\D_{\rm h}^{-1}\dive_{\rm h}$ denotes the Leray
projection operator in two space dimension.
\end{proof}
Lemmas\refer{Wiegnersingperturb} and\refer{nonlinearbf}  can be summarized in the following corollary.
\begin{col}
\label{Wiegnersingperturbcorol} {\sl If~$\uh$ is a regular solution
of (NS2.5D), then we have \beno &&\|\uh(t)\|_{L^\infty_{\rm v}
(L^2_{\rm h})}^2 \exp\Bigl (2\int_0^t g^2(t')dt'\Bigr)
\leq  \|\uh_0\|_{L^\infty_{\rm v} (L^2_{\rm h})}^2+C \int_0^t  \biggl( \|e^{t'\Delta_{\rm h}} \uh_0\|^2_{\LVLH} \\
&&\ \qquad\qquad\qquad{}+ g^4(t')\Bigl( \int_0^{t'}
\|\uh(t'')\|^2_{\LVLH} dt''\Bigr)^2\biggr)
g^2(t') \exp\Bigl (2\int_0^{t'}g^2(t'')dt''\Bigr) dt'\andf\\
&&\|\om^{\rm h}(t)\|_{L^\infty_{\rm v} (L^2_{\rm h})}^2 \exp\Bigl
(2\int_0^t g^2(t')dt'\Bigr) \leq \|\nabla_{\rm h}
\uh_0\|_{L^\infty_{\rm v} (L^2_{\rm h})}^2
 +C \int_0^t  \biggl(g^2(t') \|e^{t'\Delta_{\rm h}} \uh_0\|^2_{\LVLH}  \\
&&\quad\qquad{}+ g^4(t')\Bigl( \int_0^{t'}
\|\uh(t'')\|_{\LVLH}\|\om^{\rm h}(t'')\|_{L^\infty_{\rm v} (L^2_{\rm
h})} dt''\Bigr)\biggr) g^2(t') \exp\Bigl
(2\int_0^{t'}g^2(t'')dt''\Bigr) dt'\,. \eeno }
\end{col}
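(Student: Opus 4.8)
The plan is to derive each of the two displayed inequalities by inserting the low-frequency bounds of Lemma\refer{nonlinearbf} into the weighted decay estimate of Lemma\refer{Wiegnersingperturb}, applied first to $U=\uh$ and then to $U=\om^{\rm h}$. The only thing to check before invoking Lemma\refer{Wiegnersingperturb} is that each of these fields satisfies the differential hypothesis\refeq{Wiegnersingperturbeq1}. For $U=\uh$ this follows at once from the horizontal energy identity\refeq{1.3c}: since $\|\na_\e\uh(t,\cdot,z)\|_{L^2_{\rm h}}^2=\|\nablah\uh(t,\cdot,z)\|_{L^2_{\rm h}}^2+\e^2\|\p_3\uh(t,\cdot,z)\|_{L^2_{\rm h}}^2\geq\|\nablah\uh(t,\cdot,z)\|_{L^2_{\rm h}}^2$, discarding the non-negative vertical dissipation in\refeq{1.3c} turns that equality into exactly\refeq{Wiegnersingperturbeq1}. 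The identical manipulation applied to the vorticity identity\refeq{1.3a} shows that $U=\om^{\rm h}$ satisfies\refeq{Wiegnersingperturbeq1} as well.

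With the hypotheses in hand, I would apply Lemma\refer{Wiegnersingperturb} with $U=\uh$, which controls the weighted $\LVLH$ norm of $\uh$ by $\|\uh_0\|_{\LVLH}^2$ plus the time integral of $\|\uh_{\flat,g}(t')\|_{\LVLH}^2\,g^2(t')$ against the exponential weight. It then remains to substitute the first bound of Lemma\refer{nonlinearbf} for $\|\uh_{\flat,g}(t')\|_{\LVLH}$ and to square the resulting sum by means of the elementary inequality $(a+b)^2\leq 2a^2+2b^2$; this reproduces the two-term integrand of the first displayed inequality, the factor $2$ being absorbed into the constant $C$.

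For the second inequality I would repeat the argument with $U=\om^{\rm h}$. Lemma\refer{Wiegnersingperturb} now produces the weighted bound with initial contribution $\|\om^{\rm h}(0)\|_{\LVLH}^2$, and here I would use the two-dimensional identity $\|\om^{\rm h}(0)(\cdot,z)\|_{L^2_{\rm h}}=\|\nablah\uh_0(\cdot,z)\|_{L^2_{\rm h}}$, valid for each $z$ because $\uh_0(\cdot,z)$ is horizontally divergence free, to rewrite this initial term as $\|\nablah\uh_0\|_{\LVLH}^2$ exactly. Substituting the second bound of Lemma\refer{nonlinearbf} for $\|\om^{\rm h}_{\flat,g}(t')\|_{\LVLH}$ and squaring as before then yields the integrand of the second displayed inequality.

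I do not anticipate any genuine obstacle: the corollary is a direct composition of the two preceding lemmas, and every analytic ingredient has already been put in place. The two points deserving a moment's attention are the sign check that permits dropping the $\e^2\p_3^2$ dissipation when passing from the equalities\refeq{1.3c} and\refeq{1.3a} to the inequality\refeq{Wiegnersingperturbeq1}, and the elementary constants generated by squaring the sums of Lemma\refer{nonlinearbf}, all of which are harmlessly folded into $C$.
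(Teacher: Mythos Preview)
Your proposal is correct and follows exactly the route the paper intends: the corollary is stated without a separate proof, being presented simply as the combination of Lemmas\refer{Wiegnersingperturb} and\refer{nonlinearbf}, and your write-up supplies precisely that combination, including the verification of hypothesis\refeq{Wiegnersingperturbeq1} from\refeq{1.3c} and\refeq{1.3a}. Note incidentally that the paper's second displayed inequality is missing a square on the inner time integral (it reappears, squared, when the corollary is applied in the proof of Theorem\refer{estimfondNS2.5}); your use of $(a+b)^2\leq 2a^2+2b^2$ produces the correct squared term.
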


Now let us turn to the proof of Theorem \ref{estimfondNS2.5}.

\begin{proof}[Proof of Theorem \ref{estimfondNS2.5}]
Following Wiegner's method in \cite{Wiegner}, the first step
consists in the application of this corollary to get a very rough
decay estimate on~$ \|\uh(t)\|_{\LVLH}$. Let us choose the
function~Ê$g$ such that
$$
2g^2(t) = \frac 3 T \Bigl( e+\frac t T\Bigr)^{-1}\log^{-1}
\Big(e+\frac t T\Bigr)\quad\hbox{which gives}\quad \exp\Bigl
(2\int_0^t g^2(t')dt'\Bigr) = \log^3\Bigl(e+\frac t T\Bigr)\,\cdotp
$$
Here the positive real number~$T$ is a scaling parameter which will
be chosen later on. The first inequality of
Corollary\refer{Wiegnersingperturbcorol} together with
Estimate\refeq{1.4} with~$p=\infty$ gives \beno
&&\|\uh(t)\|_{L^\infty_{\rm v} (L^2_{\rm h})}^2\log^3\Bigl(e+\frac t T\Bigr)\leq  \|\uh_0\|_{L^\infty_{\rm v} (L^2_{\rm h})}^2\\
&&\qquad\quad\qquad{}+C \int_0^t    \Bigl( e+\frac {t'} T\Bigr)^{-1} \log^2\Bigl(e+\frac {t'} T\Bigr) \|e^{t'\Delta_{\rm h}} \uh_0\|^2_{\LVLH}  \frac{dt'}T\\
&&\qquad\quad\qquad\qquad\quad\qquad{} + C \|\uh_0\|_{L^\infty_{\rm
v} (L^2_{\rm h})}^4 \int_0^t  \Bigl( e+\frac {t'} T\Bigr)^{-3}
\Bigl(\frac {t'} {T}\Bigr)^2\frac {dt'} T\,\cdotp \eeno For any
positive~$\d$ less than~$1$, we have
$$
 \|e^{t\Delta_{\rm h}} \uh_0\|^2_{\LVLH}
\leq
\frac 1 {t^{\d}} \sup_{t>0} {t^\d} \|e^{t\Delta_{\rm h}} \uh_0\|^2_{\LVLH} .
$$
Let us observe that \beq \label{definBesovelem} \sup_{t>0} {t^\d}
\|e^{t\Delta_{\rm h}} \uh_0\|^2_{\LVLH}=\sup_{z\in\Rv}  \sup_{t>0}
{t^\d} \|e^{t\Delta_{\rm h}} \uh_0(\cdot,z)\|_{L^2(\Rh)}^2 =
\|\uh_0\|_{L^\infty_{\rm v} (\dot B^{-\d}_{2,\infty}(\R_{\rm
h}^2))}. \eeq For~Ê$\d$ positive and less than~$1$,  the function~$
r\mapsto (e+r)^{-1} r^{-\d} \log^2(e+r)$ is integrable on~$\R^+$.
Thus we get that \beno
\|\uh(t)\|^2_{\LVLH} &  \leq &     C_\d(u_0,T) \log^{-2}\Bigl(e+\frac t T\Bigr) \with\\
\nonumber C_\d(u_0,T) & \eqdefa & C  \|\uh_0\|^2_{\LVLH}
\bigl(1+\|\uh_0\|^2_{\LVLH}\bigr)+  \frac {C_\d} {T^{\d}}
\|\uh_0\|_{L^\infty_v (\dot B^{-\d}_{2,\infty}(\Rh))}^2. \eeno Thus
we have \beq \label{firstdecaylog}
\begin{split}
&\quad \qquad T\geq T_\d(\uh_0)\Longrightarrow
\|\uh(t)\|^2_{\LVLH}  \leq      C_\d(\uh_0) \log^{-2}\Bigl(e+\frac t T\Bigr) \with \\
&
{} C(\uh_0)\eqdefa  \|\uh_0\|^2_{\LVLH}
\bigl(1+\|\uh_0\|^2_{\LVLH}\bigr)\andf  T_\d(\uh_0)\eqdefa
C_\d^{\f1\d} \biggl( \frac {\|\uh_0\|_{L^\infty_{\rm v}(\dot
B^{-\d}_{2,\infty}(\R_{\rm h}^2))}}
{\|\uh_0\|_{\LVLH}}\biggr)^{\frac 2 \d}\,.
\end{split}
\eeq

Now let us apply the second inequality of Corollary\refer{Wiegnersingperturbcorol} with the function~$g$ defined by
$$
2g^2(t) = \frac 1 T \Bigl(1+\frac \d 2\Bigr)  \Bigl(e+\frac t T\Bigr)^{-1} \quad\hbox{which gives}\quad
 \exp\Bigl (2\int_0^t g^2(t')dt'\Bigr) =   e^{-1}\Bigl(e+\frac t T\Bigr)^{1+\frac \d 2}\cdotp
 $$
 If we define
$$
\Om_\d(t)\eqdefa \sup_{t'\leq t}  \Bigl( e+\frac {t'}
T\Bigr)^{1+\frac \d 2} \|\om^{\rm h}(t')\|^2_{\LVLH} \,,
$$
this gives
$$
\longformule{ \Om_\d(t) \leq  e\|\na_{\rm h}\uh_0\|^2_{\LVLH} +\frac
C T\int_0^t \Bigl(e+\frac{t'}T\Bigr)^{-1+\frac \d 2}
\bigl\|e^{t'\Delta_{\rm h}} \uh_0\bigr\|^2_{\LVLH} \frac {dt'} T } {
{}+C \int_0^t \Bigl (e+\frac {t'} T\Bigr)^{-2+\frac \d2}
\Bigl(\int_0^{t'}
 \|\uh (t'')\|_{L^\infty_{\rm v} (L^2_{\rm h})}  \|\om(t'')\|_{\LVLH}  \,\frac{dt''} T\Bigr)^2 \frac {dt'} T\,\cdotp
}
$$
Using\refeq{definBesovelem} we get   that \beno \int_0^t
\Bigl(e+\frac{t'}T\Bigr)^{-1+\frac \d 2} \bigl\|e^{t'\Delta_{\rm h}}
\uh_0\bigr\|^2_{\LVLH} \frac {dt'} T & = &   \int_0^t
\Bigl(e+\frac{t'}T\Bigr)^{-1+\frac \d 2}\Bigl(\frac{t'}T\Bigr)^{-\d}
(\frac{t'}T\Bigr)^{\d} \bigl\|e^{t'\Delta_{\rm h}}
\uh_0\bigr\|^2_{\LVLH}
\frac {dt'} T\\
& \leq  & C_\d  \frac 1 {T^\d} \| \uh_0\bigr\|^2_{L^\infty_{\rm
v}(\dot B^{-\d}_{2,\infty}(\R_{\rm h}^2)) }. \eeno
 Then Estimate\refeq{firstdecaylog} and the definition of~Ê$\Om_\d$  implies that, if~$T\geq T_\d(\uh_0)$,
$$
\longformule{
 \Om_\d(t) \leq  e\|\na_{\rm h}\uh_0\|^2_{\LVLH} +C_\d
\frac 1 {T^{1+\d}} \| \uh_0\bigr\|^2_{L^\infty_{\rm v}(\dot
B^{-\d}_{2,\infty}(\R_{\rm h}^2))}
 }
 {
 {} +C(\uh_0) \int_0^t \Bigl
(e+\frac {t'} T\Bigr)^{-2+\frac \d2} \Bigl(\int_0^{t'} \Bigl(
e+\frac {t'' } T\Bigr)^{-\frac 12 -\frac \d 4}  \log^{-1} \Bigl(
e+\frac {t'' } T\Bigr)  \,\frac{dt''} T\Bigr)^2 \Om_\d(t') \frac
{dt'} T\,\cdotp
}
$$
Since~$\d$ is less than~$1$, Cauchy Schwarz
inequality with the measure $\ds \Bigl( e+\frac {t'' }
T\Bigr)^{-\frac 12 -\frac \d 4} \frac {dt''}  T $ gives,
 \beno \cI(t') &
\eqdefa &  \biggl(\int_0^{t'}
\Bigl( e+\frac {t'' } T\Bigr)^{-\frac 12 -\frac \d 4}  \log^{-1} \Bigl( e+\frac {t'' } T\Bigr)  \,\frac{dt''} T\biggr)^2  \\
&\leq &  \biggl(\int_0^{t'}
\Bigl( e+\frac {t'' } T\Bigr)^{-\frac 12 -\frac \d 4}    \,\frac{dt''} T\biggr) \biggl(\int_0^{t'}
\Bigl( e+\frac {t'' } T\Bigr)^{-\frac 12 -\frac \d 4}  \log^{-2} \Bigl( e+\frac {t'' } T\Bigr)  \,\frac{dt''} T\biggr)\\
&\leq  & C_\d \Bigl( e+\frac {t' } T\Bigr)^{1- \frac \d 2} \log^{-2}
\Bigl( e+\frac {t'} T\Bigr) \cdotp \eeno Thus we obtain
$$
\longformule{ \Om_\d(t) \leq  e\|\na_{\rm h}\uh_0\|^2_{\LVLH} +C_\d
\frac 1 {T^{1+\d}} \| \uh_0\bigr\|^2_{L^\infty_{\rm v}(\dot
B^{-\d}_{2,\infty}(\R_{\rm h}^2)) } } { +C(\uh_0) \int_0^t
\Om_\d(t') \Bigl( e+\frac {t' } T\Bigr)^{- 1} \log^{-2} \Bigl(
e+\frac {t'} T\Bigr) \frac {dt'} T\,\cdotp }
$$
Gronwall lemma implies that
$$
\Om_\d(t) \leq \Bigl( e\|\na_{\rm h}\uh_0\|^2_{\LVLH} +C_\d  \frac 1
{T^{1+\d}} \| \uh_0\bigr\|^2_{L^\infty_{\rm v}(\dot
B^{-\d}_{2,\infty}(\R_{\rm h}^2))}\Bigr)
\exp\bigl(C(\uh_0)\bigr)\,\cdotp
$$
By integration this gives, if~$T\geq T_\d(\uh_0)$, \beno
\int_0^{\infty} \|\nabla_{\rm h} \uh (t)\|_{L^\infty_{\rm
v}(L^2_{\rm h})}^2 dt&\leq&
C\int_0^\infty\Om_\d(t')\Bigl(e+\frac{t}T\Bigr)^{-1-\f{\d}2}\,dt\\
&\leq&  C\Bigl( \|\na_{\rm h}\uh_0\|^2_{\LVLH} T + C_\d \frac 1
{T^{\d}} \| \uh_0\bigr\|^2_{L^\infty_{\rm v}(\dot
B^{-\d}_{2,\infty}(\R_{\rm h}^2)) } \Bigr)\exp
\bigl(C_\d(\uh_0)\bigr)\,. \eeno Selecting ~$T=T_\d(\uh_0)$ in the
above inequality concludes the proof of
Theorem\refer{estimfondNS2.5}.
\end{proof}

\setcounter{equation}{0}
\section{The global wellposedness of (NS3D) for slowly varying initial data}

This section follows essentially the lines of\ccite{cg3} up to the
fact that the external force due to the error term is simpler  and
that we deal with less regular solutions.  Let us recall the
procedure. We consider \ben \label{definuapp} u^\e_{\rm app} (t,x) =
( \uh(t,x_h,\e x_3) , 0 )\andf
  p^\e_{\rm app}(t,x)  =   p^{{\rm h}}  (t,x_h,\e x_3).
  \een
 where~$(\uh, p^{\rm h})$ is the solution of~(NS2.5D) with initial data~$\uh_0$.  Let us search the solution of~(NS3D) as
$$
u^{\e} =  u^\e_{\rm app} +  R^{\e} .
$$
 Classical computations leads to
\begin{equation} \label{erroruapp}
 \left\{\begin{array}{l}
\displaystyle \partial_t R^\e + R^\e \cdot \nabla R^\e -\Delta
R^\e+u^\e_{\rm app} \cdot \nabla R^\e+R^\e \cdot \nabla u^\e_{\rm
app} =
 F^\e-\nabla q^\e\with\\
 \displaystyle \dive R^\e=0 \andf F^\e \eqdefa \bigl(0, \e \partial_z  p^{\rm h}\bigr)  (t,x_h,\e
 x_3),\\
\displaystyle R^\e|_{t=0} =0\,.
\end{array}\right.
\end{equation}

The first step of the proof is to prove a global existence lemma for
a perturbed Navier-Stokes system with small external force. Mover
precisely, we have the following lemma.
\begin{lem}
\label{perturaNSanisolight}
{\sl
Let us consider a divergence free vector field~$v$  such that
$$
\cN_v \eqdefa \int_0^\infty N(v(t))dt \with N(w) \eqdefa
\|w\|_{L^\infty(\R^3)}^2 +\|\nabla _h w\|_{\LVLH}^2 +\| \partial_3
w\|_{L^2_{\rm v}(\dot H^{\frac 12}(\Rh))}^2
$$
is finite and an external force $F$ in~$L^2(\R^+;\dot
H^{-\frac12}(\R^3))$. We consider the system
$$
{\rm (NS)}_v\left \{
\begin{array}{c}\partial_t w + w \cdot \nabla w -\Delta w+v \cdot \nabla w+w \cdot \nabla v = -\nabla p+F ,
\qquad (t,x)\in\R^+\times\R^3, \\
 \dive w =0  \andf w_{|t=0} = 0.
\end{array}
\right.
$$
Then a positive constant~$C_0$ exists such that if
$$
\|F\|_{L^2(\R^+;\dot H^{-\frac12}(\R^3))}^2 \leq \frac 1{C_0} \exp \bigl( C_0\cN_{v}\bigr)\,,
$$
 the system~${\rm (NS)}_v $ has a unique global solution $w$ in the
space
$$
L^\infty(\R^+; \dot H^{\frac 12} (\R^3)) \cap L^2(\R^+;\dot H^{\frac
32}(\R^3))\,.
$$
}
\end{lem}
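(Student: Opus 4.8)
The plan is to obtain the solution by a standard approximation scheme (a Friedrichs truncation, or equivalently a Fujita--Kato fixed point for the perturbed bilinear map) producing a local solution in~$L^\infty_{\rm loc}(\dH^{\f12}(\R^3))\cap L^2_{\rm loc}(\dH^{\f32}(\R^3))$, and then to promote it to a global one by an a priori estimate in the critical space~$\dH^{\f12}(\R^3)$ together with a continuation argument. Since~${\rm (NS)}_v$ differs from the usual Navier--Stokes system only by the two linear coupling terms~$v\cdot\na w$ and~$w\cdot\na v$ and by the source~$F$, the entire difficulty is concentrated in controlling these three terms by~$\cN_v$ and the norm of~$F$; the whole point of the definition of~$N(v)$ is precisely that its three pieces are tailored to absorb the coupling terms.

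First I would perform the energy estimate in~$\dH^{\f12}$. Taking the~$\dH^{\f12}$ scalar product of the equation with~$w$ and using that~$w$ is divergence free, so that the pressure drops out, one gets
\beno
\f12\f{d}{dt}\|w\|_{\dH^{\f12}}^2+\|w\|_{\dH^{\f32}}^2 &=& (F\,|\,w)_{\dH^{\f12}}-(w\cdot\na w\,|\,w)_{\dH^{\f12}}\\
&&\quad{}-(v\cdot\na w\,|\,w)_{\dH^{\f12}}-(w\cdot\na v\,|\,w)_{\dH^{\f12}}.
\eeno
The source term is handled by duality and Young's inequality,~$(F\,|\,w)_{\dH^{\f12}}\leq \f18\|w\|_{\dH^{\f32}}^2+C\|F\|_{\dH^{-\f12}}^2$, while the genuinely nonlinear term obeys the classical trilinear bound~$|(w\cdot\na w\,|\,w)_{\dH^{\f12}}|\lesssim \|w\|_{\dH^{\f12}}\|w\|_{\dH^{\f32}}^2$, which is absorbed by the dissipation as soon as~$\|w\|_{\dH^{\f12}}$ stays small along the evolution, a smallness that the continuation argument below will guarantee.

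The main obstacle is the estimate of the two coupling terms. Here I would use anisotropic Littlewood--Paley analysis, splitting horizontal and vertical frequencies separately and applying a Bony-type paraproduct decomposition in each group of variables, the goal being a bilinear-in-$w$ bound of the form~$|(v\cdot\na w\,|\,w)_{\dH^{\f12}}|+|(w\cdot\na v\,|\,w)_{\dH^{\f12}}|\lesssim N(v)^{\f12}\|w\|_{\dH^{\f12}}\|w\|_{\dH^{\f32}}$, so that by Young's inequality
\beno
|(v\cdot\na w\,|\,w)_{\dH^{\f12}}|+|(w\cdot\na v\,|\,w)_{\dH^{\f12}}| &\leq& \f14\|w\|_{\dH^{\f32}}^2+C\,N(v)\,\|w\|_{\dH^{\f12}}^2.
\eeno
The delicate point is that each of the three constituents of~$N(v)$, namely the isotropic~$\|v\|_{L^\infty}^2$, the horizontal term~$\|\na_h v\|_{\LVLH}^2$ and the vertical term~$\|\p_3 v\|_{L^2_{\rm v}(\dH^{\f12}(\Rh))}^2$, must be made to appear exactly where the corresponding component of~$\na$ or of~$v$ forces it; since the vertical direction carries no dissipative gain, the factor~$\p_3$ falling on~$v$ has to be compensated by the horizontal smoothing alone, which is the reason for the unusual mixed norm~$L^2_{\rm v}(\dH^{\f12}(\Rh))$ in~$N(v)$. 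This anisotropic product estimate is the technical heart of the proof.

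Granting these estimates, the differential inequality reduces to
\beno
\f{d}{dt}\|w\|_{\dH^{\f12}}^2+\|w\|_{\dH^{\f32}}^2 &\leq& C\,N(v(t))\,\|w\|_{\dH^{\f12}}^2+C\|F(t)\|_{\dH^{-\f12}}^2,
\eeno
valid as long as~$\|w\|_{\dH^{\f12}}$ is small enough for the nonlinear term to have been absorbed. Gronwall's lemma then yields
\beno
\|w(t)\|_{\dH^{\f12}}^2+\int_0^t\|w(t')\|_{\dH^{\f32}}^2\,dt' &\leq& C\,\|F\|_{L^2(\R^+;\dH^{-\f12})}^2\,\exp\bigl(C\cN_v\bigr),
\eeno
which is exactly where the factor~$\exp(C_0\cN_v)$ of the statement originates. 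To close, I would run a continuity (bootstrap) argument: the hypothesis~$\|F\|_{L^2(\R^+;\dH^{-\f12})}^2\leq \f1{C_0}\exp(C_0\cN_v)$ forces the right-hand side above below a fixed small threshold, so~$\|w(t)\|_{\dH^{\f12}}$ never reaches the level at which the nonlinear term ceases to be absorbable; the a priori bound is therefore genuinely propagated and the local solution extends globally. Uniqueness follows from the same~$\dH^{\f12}$ energy estimate applied to the difference of two solutions.
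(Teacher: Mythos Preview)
Your outline is correct and follows the same route as the paper: a~$\dot H^{\frac12}$ energy estimate, the standard bound~$|(w\cdot\nabla w\,|\,w)_{\dot H^{\frac12}}|\lesssim\|w\|_{\dot H^{\frac12}}\|\nabla w\|_{\dot H^{\frac12}}^2$ for the genuine nonlinearity, the coupling estimate
\[
|(v\cdot\nabla w\,|\,w)_{\dot H^{\frac12}}|+|(w\cdot\nabla v\,|\,w)_{\dot H^{\frac12}}|\lesssim N(v)^{\frac12}\|w\|_{\dot H^{\frac12}}\|\nabla w\|_{\dot H^{\frac12}},
\]
then Gronwall (the paper uses the exponential weight~$w_\lambda(t)=\exp\bigl(-\lambda\int_0^tN(v)\bigr)w(t)$, which is the same thing) and a continuation argument.

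The one point worth flagging is that you reach for anisotropic Littlewood--Paley and Bony paraproducts to prove the coupling bound, whereas the paper obtains it by entirely elementary means. It uses the duality~$(a\,|\,b)_{\dot H^{\frac12}}\leq\|a\|_{L^2}\|\nabla b\|_{L^2}$ together with the natural splitting~$w\cdot\nabla v=w^{\rm h}\cdot\nabla_{\rm h}v+w^3\partial_3 v$. The term~$v\cdot\nabla w$ is handled directly by~$\|v\|_{L^\infty}$; for~$w^{\rm h}\cdot\nabla_{\rm h}v$ one puts~$\nabla_{\rm h}v$ in~$L^\infty_{\rm v}(L^2_{\rm h})$ and uses the two-dimensional embedding~$\|a\|_{L^\infty(\R^2_{\rm h})}\lesssim\|a\|_{\dot H^{\frac12}}^{\frac12}\|\nabla_{\rm h}a\|_{\dot H^{\frac12}}^{\frac12}$ on each slice; for~$w^3\partial_3 v$ one uses the horizontal product law~$\dot H^{\frac12}\cdot\dot H^{\frac12}\hookrightarrow L^2$ on slices and the one-dimensional trace inequality~$\|a\|_{L^\infty(\R;\mathcal H)}^2\leq 2\|a\|_{L^2(\R;\mathcal H)}\|\partial_3 a\|_{L^2(\R;\mathcal H)}$ to pass from~$L^\infty_{\rm v}$ to~$L^2_{\rm v}$. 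Each of the three pieces of~$N(v)$ appears exactly once, as you anticipated. Your proposed paraproduct route would certainly work, but the paper's argument is shorter and avoids any frequency decomposition.
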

\begin{proof}
The fact that the system~${\rm (NS)}_v$ is locally wellposed follows
from a classical  Fujita and Kato theory (\cite{fujitakato}). In
order to prove global existence part of Lemma
\ref{perturaNSanisolight}, it suffices to control
$$
\|w(t)\|_{\dot  H^{\frac 12} (\R^3)}^2+\int_0^t \|\nabla w(t')\|_{\dot \dot H^{\frac 12} (\R^3)}^2 dt'\,.
$$
Performing a~$\dot H^{\frac 12}$ energy estimate for $(NS)_v,$ we
get \beq \label{perturaNSanisolightdemoeq1}
\begin{split}
\frac 12 \frac d {dt} \|w(t)\|_{\dot  H^{\frac 12} (\R^3)}^2&
+\|\nabla w(t)\|_{\dot  H^{\frac 12} (\R^3)}^2
= -(w\cdot \nabla w|w)_{\dot  H^{\frac 12} (\R^3)} \\
&-(v\cdot \nabla w|w)_{\dot  H^{\frac 12} (\R^3)} -(w\cdot \nabla
v|w)_{\dot  H^{\frac 12} (\R^3)} -(F|w)_{\dot  H^{\frac 12} (\R^3)}
\end{split}
\eeq Law of product in Sobolev spaces implies that~$\|w\cdot \nabla
w\|_{\dot  H^{-\frac 12} (\R^3)}\leq \|w\|_{\dot  H^{\frac 12}
(\R^3)}\| \nabla w\|_{\dot  H^{\frac 12} (\R^3)}$. This gives \beq
\label{perturaNSanisolightdemoeq2} \bigl|(w\cdot \nabla w|w)_{\dot
H^{\frac 12} (\R^3)} \bigr|\leq C \|w\|_{\dot  H^{\frac 12}
(\R^3)}\| \nabla w\|_{\dot  H^{\frac 12} (\R^3)} ^2. \eeq Using that
\beq \label{perturaNSanisolightdemoeq3} (a|b)_{\dot  H^{\frac 12}
(\R^3)} \leq \|a\|_{L^2}\| \nabla b\|_{L^2}\,, \eeq we infer that
$$
 \bigl|(v\cdot \nabla w|w)_{\dot  H^{\frac 12} (\R^3)}\bigr| \leq  \|v\cdot \nabla w\|_{L^2(\R^3)} \|\nabla w\|_{L^2(\R^3)} \leq \|v\|_{L^\infty(\R^3)} \|\nabla w\|_{L^2(\R^3)}^2.
 $$
 Interpolation inequality between Sobolev spaces ensures
$$
\bigl|(v\cdot \nabla w|w)_{\dot  H^{\frac 12} (\R^3)} \bigr|\leq C
\|v\|_{L^\infty(\R^3)} \|w\|_{\dot  H^{\frac 12} (\R^3)}\| \nabla w\|_{\dot  H^{\frac 12} (\R^3)} .
$$
Then convexity inequality implies that
 \beq
\label{perturaNSanisolightdemoeq4} \bigl|(v\cdot \nabla w|w)_{\dot
H^{\frac 12} (\R^3)} \bigr|\leq \frac 1 {10}  \| \nabla w\|_{\dot
H^{\frac 12} (\R^3)} ^2+C \|v\|_{L^\infty(\R^3)}^2 \|w\|_{\dot
H^{\frac 12} (\R^3)}^2. \eeq The term~$(w\cdot \nabla v|w)_{\dot
H^{\frac 12} (\R^3)}$ is a little bit more delicate. In order to
treat it, we must take into account  some anisotropy.
Using\refeq{perturaNSanisolightdemoeq3}, we get
$$
\bigl| (w^{\rm h} \cdot \nabla_{\rm h}  v|w)_{\dot  H^{\frac 12} (\R^3)}\bigr| \leq  \|w^{\rm h}\cdot \nabla _{\rm h}v\|_{L^2(\R^3)}
\|\nabla w\|_{L^2(\R^3)}.
$$
Interpolation theory implies that
 \beq
\label{perturaNSanisolightdemoeq5}
\|a\|_{L^\infty(\Rh)} \lesssim \|a\|_{\dot H^{\frac 12}(\Rh)} ^{\frac 12}
 \|\nabla_{\rm h}a\|_{\dot H^{\frac 12}(\Rh)} ^{\frac 12} .
\eeq
We deduce that for any~$x_3$ in~$\Rv$, we have
$$
\|w^{\rm h}(\cdot, x_3) \cdot \nabla _{\rm h}v(\cdot
,x_3)\|_{L^2(\Rh)}\lesssim \|w(\cdot, x_3)\|_{\dot H^{\frac12}
(\Rh)}^{\frac12}\|\nabla w(\cdot, x_3)\|_{\dot H^{\frac12}
(\Rh)}^{\frac12} \|\nabla_{\rm h} v\|_{\LVLH}.
$$
As obviously~$\|a\|_{L^2(\Rv; \dot H^{\frac12} (\Rh))}$ is less than or equal to~$\|a\|_{\dot H^{\frac12} (\R^3)}$, we deduce that
$$
\bigl| (w^{\rm h} \cdot \nabla_{\rm h}  v|w)_{\dot  H^{\frac 12} (\R^3)}\bigr| \lesssim
\|w\|_{\dot H^{\frac12} (\R^3)}^{\frac 12}\|\nabla w\|_{\dot H^{\frac12} (\R^3)}^{\frac 12}
 \|\nabla_{\rm h} v\|_{\LVLH} \|\nabla w\|_{L^2(\R^3)}.
$$
Interpolation inequality between Sobolev spaces and convexity
inequality yields \beq \label{perturaNSanisolightdemoeq6} \bigl|
(w^{\rm h} \cdot \nabla_{\rm h}  v|w)_{\dot  H^{\frac 12}
(\R^3)}\bigr| \leq
 \frac 1 {10}  \| \nabla w\|_{\dot  H^{\frac 12} (\R^3)} ^2+C
\|\nabla_{\rm h} v\|_{\LVLH}^2 \|w\|_{\dot  H^{\frac 12} (\R^3)}^2.
\eeq Now let us examine the term which involves vertical derivative.
Using again\refeq{perturaNSanisolightdemoeq3}, we are reduced to
estimate~$\|w^3\partial_3v\|_{L^2(\R^3)}$. Using law of product of
Sobolev spaces in~$\Rh$, we get, for any~$x_3$ in~$\Rv$,
$$
\|w^3(\cdot,x_3)\partial_{3} v(\cdot, x_3) \|_{L^2_{\rm h} } \leq C
\|w(\cdot ,x_3)\|_{\dot H^{\frac 12}(\Rh)}  \|\partial_{3} (\cdot
,x_3)\|_{\dot H^{\frac 12}(\Rh)}  .
$$
Observing that if~$\cH$ is a Hilbert space and $a$ is regular
function from $\Rv$into~$\cH$, we can write
$$
\|a(x_3)\|_{\cH}^2 =2\int_{-\infty} ^{x_3}\bigl( \partial_{y_3}
a(y_3)\big| a(y_3)\bigr)_{\cH} dy_3 .
$$
Then using Cauchy-Schwarz  inequality, we get \beq
\label{perturaNSanisolightdemoeq6b} \|a\|^2_{L^\infty(\R;\cH)} \leq
2 \|a\|_{L^2(\R;\cH)} \| \partial_{3} a\|_{L^2(\R;\cH)}. \eeq We
infer that
$$
\|w\|_{L^\infty_{\rm v}(\dot H^{\frac 12}(\Rh))}  \leq \sqrt 2\,
\|w\|_{L^2(\R_{\rm v};\dot H^{\frac 12}(\Rh))} ^{\frac 12}\| \p_{3}
w\|_{L^2(\R_{\rm v};\dot H^{\frac 12}(\Rh))}^{\frac 12} \leq \sqrt
2\, \|w\|_{\dot H^{\frac 12}(\R^3))}^{\frac 12} \| \p_{3} w\|_{\dot
H^{\frac 12}(\R^3)}^{\frac 12}.
$$
Thus
$$
\|w^3\partial_{3} v \|_{L^2(\R^3)} \leq C
 \|w\|_{\dot H^{\frac 12}(\R^3))}^{\frac 12} \| \p_{3} w\|_{\dot H^{\frac 12}(\R^3)}^{\frac 12}
\|\partial_{3} v\|_{L^2_{\rm v}(\dot  H^{\frac 12} (\Rh))} .
$$
After interpolation and convexity inequality, we infer that \beq
\label{perturaNSanisolightdemoeq6as} \begin{split} \bigl| (w^{3}
\cdot \p_3 v|w)_{\dot H^{\frac 12} (\R^3)}\bigr|\leq
&\|w^3\partial_{3} v \|_{L^2(\R^3)}\|\na w\|_{L^2(\R^3)}\\
\leq & \frac 1 {10} \| \nabla w\|_{\dot H^{\frac 12}(\R^3)}^2 +C
\|\partial_{3} v\|_{L^2_{\rm v}(\dot  H^{\frac 12}
(\Rh))}^2\|w\|_{\dot H^{\frac 12}(\R^3)}^2 . \end{split} \eeq As we
have
$$
(F|w)_{\dot H^{\frac 12}(\R^3)} \leq \frac 1{10}  \|\nabla
w\|^2_{\dot H^{\frac 12}(\R^3)}+ 10\|F\|^2_{\dot H^{-\frac
12}(\R^3)},
$$
we infer from\refeq{perturaNSanisolightdemoeq1},
\refeq{perturaNSanisolightdemoeq2},
\refeq{perturaNSanisolightdemoeq4},
\refeq{perturaNSanisolightdemoeq6} and
\refeq{perturaNSanisolightdemoeq6as} that
$$
\longformule { \frac 12 \frac d {dt} \|w(t)\|_{\dot  H^{\frac 12}
(\R^3)}^2 +\frac 3 5 \|\nabla w(t)\|_{\dot  H^{\frac 12} (\R^3)}^2
\leq  C\|w(t)\|_{\dot  H^{\frac 12} (\R^3)}\| \nabla w(t)\|_{\dot
H^{\frac 12} (\R^3)} ^2 } { {}+C N(v(t))  \|w(t)\|_{\dot  H^{\frac
12} (\R^3)}^2 +10 \|F\|^2_{\dot H^{\frac 12}(\R^3)}. }
$$
Defining $\ds w_\lam(t) \eqdefa \exp \Bigl( -\lam \int_0^t
N(v(t'))dt'\Bigr).$ Then for $\lam\geq C,$ we deduce that
$$
\frac 12 \frac d {dt} \|w_\lam(t)\|_{\dot  H^{\frac 12} (\R^3)}^2 +\frac 3 5 \|\nabla w_\lam(t)\|_{\dot  H^{\frac 12} (\R^3)}^2
\leq  C\|w(t)\|_{\dot  H^{\frac 12} (\R^3)}\| \nabla w_\lam(t)\|_{\dot  H^{\frac 12} (\R^3)} ^2 + 10\|F\|^2_{\dot H^{\frac 12}(\R^3)}.
$$
Thus as long as
\beq
\label{perturaNSanisolightdemoeq7}
C\|w(t)\|_{\dot  H^{\frac 12} (\R^3)} \leq \frac 1 {10}\,\virgp
\eeq
we have that
$$
 \frac d {dt} \|w_\lam(t)\|_{\dot  H^{\frac 12} (\R^3)}^2 + \|\nabla w_\lam(t)\|_{\dot  H^{\frac 12} (\R^3)}^2
\leq 20\|F\|^2_{\dot H^{\frac 12}(\R^3)}.
$$
So that whenever the Condition\refeq{perturaNSanisolightdemoeq7} is
satisfied, we have
$$
 \|w(t)\|_{\dot  H^{\frac 12} (\R^3)}^2 +\int_0^t  \|\nabla w(t')\|_{\dot  H^{\frac 12} (\R^3)}^2 dt'
\leq 20\|F\|^2_{L^2(\R^+;\dot H^{\frac 12}(\R^3))}
\exp\bigl(\lam\cN_v\bigr)\,.
$$
Hence by a very classical  continuation argument, we get that, if
$$
20\|F\|^2_{L^2(\R^+;\dot H^{\frac 12}(\R^3))}
\exp\bigl(\lam\cN_v\bigr)\leq \frac 1 {121C^2}\,\virgp
$$
then Condition\refeq{perturaNSanisolightdemoeq7} is always satisfied
and  the norms~$L^\infty(\R^+;\dot
H^{\frac 12}(\R^3))$ and~$L^2(\R^+;\dot H^{\frac 32}(\R^3))$  of the solution are controlled and the
lemma is proved.
\end{proof}

Now let us compute~$N(u^\e_{\rm app}(t))$. In view of
\eqref{definuapp} and because of the vertical scaling, we have \beq
\label{4.12} N(\uapp(t)) = \|\uh(t)\|_{L^\infty(\R^3)}^2 +
\|\nabla_{\rm h} \uh(t)\|_{\LVLH}^2 + \e \|\partial_z
\uh(t)\|_{L^2_{\rm v}(\dot H^{\frac 12}(\Rh))}^2. \eeq

In order to control~$N(u^\e_{\rm app}(t)),$ we need the following
propagation lemma.
\begin{lem}
\label{progaanisoelem} {\sl Let~$\uh$ be a regular solution
of~(NS2.5D). Then for any~$s$ between~$-1$ and~$1,$ we have
 \beno
&&\|\uh (t) \|_{L^\infty_{\rm v}(\dot H^s(\Rh))}^2\leq  \|\uh_0
\|_{L^\infty_{\rm v}(\dot H^s(\Rh))}^2 \exp \biggl (C_s  \int_0^t
\|\nabla_{\rm h} \uh(t')\|_{\LVLH}^2dt'\biggr),\\
&&\|\uh (t) \|_{L^2_{\rm v}(\dot H^s(\Rh))}^2 +\int_0^t \|\nabla_{\rm h}\uh (t') \|_{L^2_{\rm v}(\dot H^s(\Rh))}^2 dt'\\
&&\qquad\qquad\qquad{} \leq  \|\uh_0 \|_{L^2_{\rm v}(\dot
H^s(\Rh))}^2 \exp \biggl (C_s  \int_0^t  \|\nabla_{\rm h}
\uh(t')\|_{\LVLH}^2dt'\biggr),\eeno and \beno  &&\|\partial_z \uh
(t) \|_{L^2_{\rm v}(\dot H^s(\Rh))}^2 +\int_0^t \|\nabla_{\rm
h}\partial_z\uh (t') \|_{L^2_{\rm v}(\dot H^s(\Rh))}^2 dt'
\\
&&\qquad\qquad\qquad{} \leq  \|\partial_z\uh_0 \|_{L^2_{\rm v}(\dot
H^s(\Rh))}^2 \exp \biggl (C_s \int_0^t  \|\nabla_{\rm h}
\uh(t')\|_{\LVLH}^2dt'\biggr). \eeno }
\end{lem}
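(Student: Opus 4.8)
The plan is to run three horizontal $\dot H^s$ energy estimates, one for each asserted bound, treating the degenerate vertical diffusion $\e^2\partial_z^2$ exactly as in Lemma\refer{Wiegnersingperturb}: through the maximum principle for the $L^\infty_{\rm v}$ statement, and through integration in~$z$ for the two $L^2_{\rm v}$ statements. First I would record the basic identity. Applying the horizontal operator $(-\Delta_{\rm h})^{s/2}$ to~(NS2.5D), taking the $L^2_{\rm h}$ scalar product with $(-\Delta_{\rm h})^{s/2}\uh(t,\cdot,z)$ at fixed~$z$, and using
$$(\partial_z^2\uh\,|\,\uh)_{\dot H^s_{\rm h}}=\tfrac12\partial_z^2\|\uh(t,\cdot,z)\|_{\dot H^s_{\rm h}}^2-\|\partial_z\uh(t,\cdot,z)\|_{\dot H^s_{\rm h}}^2,$$
the pressure disappearing because $\dive_{\rm h}\uh=0$, one obtains after discarding the nonnegative term $2\e^2\|\partial_z\uh\|_{\dot H^s_{\rm h}}^2$ the differential inequality
$$\partial_t\|\uh(t,\cdot,z)\|_{\dot H^s_{\rm h}}^2-\e^2\partial_z^2\|\uh(t,\cdot,z)\|_{\dot H^s_{\rm h}}^2+2\|\nabla_{\rm h}\uh(t,\cdot,z)\|_{\dot H^s_{\rm h}}^2\leq 2\bigl|(\uh\cdot\nabla_{\rm h}\uh\,|\,\uh)_{\dot H^s_{\rm h}}\bigr|.$$

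The heart of the matter is the two-dimensional trilinear estimate
$$\bigl|(\uh\cdot\nabla_{\rm h}v\,|\,v)_{\dot H^s_{\rm h}}\bigr|\lesssim \|\nabla_{\rm h}\uh\|_{L^2_{\rm h}}\,\|v\|_{\dot H^s_{\rm h}}\,\|\nabla_{\rm h}v\|_{\dot H^s_{\rm h}},$$
valid for $-1<s<1$, which I would prove by a Bony decomposition in the horizontal frequencies; the constraint $s\in\,]-1,1[$ is precisely the range in which the relevant horizontal paraproduct and remainder laws converge, since $d/2=1$. For the first bound I take $v=\uh$, absorb $\|\nabla_{\rm h}\uh\|_{\dot H^s_{\rm h}}^2$ by Young's inequality, bound the remaining coefficient $\|\nabla_{\rm h}\uh(\cdot,z)\|_{L^2_{\rm h}}^2$ by the $z$-uniform quantity $\|\nabla_{\rm h}\uh\|_{\LVLH}^2$, and arrive at $\partial_t\Phi-\e^2\partial_z^2\Phi\leq0$ for $\Phi(t,z)\eqdef\|\uh(t,\cdot,z)\|_{\dot H^s_{\rm h}}^2\exp\bigl(-C_s\int_0^t\|\nabla_{\rm h}\uh(t')\|_{\LVLH}^2\,dt'\bigr)$; the maximum principle then gives $\sup_z\Phi(t,z)\leq\sup_z\Phi(0,z)$, which is exactly the first inequality.

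For the second bound I again take $v=\uh$ but integrate the differential inequality over $z\in\Rv$, so that the vertical Laplacian drops out and the dissipation $\|\nabla_{\rm h}\uh\|_{L^2_{\rm v}(\dot H^s_{\rm h})}^2$ is retained; bounding the nonlinear term by $\|\nabla_{\rm h}\uh\|_{\LVLH}\|\uh\|_{L^2_{\rm v}(\dot H^s_{\rm h})}\|\nabla_{\rm h}\uh\|_{L^2_{\rm v}(\dot H^s_{\rm h})}$ through Cauchy--Schwarz in $z$, and then using Young's inequality and Gronwall's lemma, yields the claim. For the third bound I differentiate~(NS2.5D) in $z$, which produces the two nonlinear contributions $\uh\cdot\nabla_{\rm h}\partial_z\uh$ and $\partial_z\uh\cdot\nabla_{\rm h}\uh$; the first is controlled by the trilinear estimate above with $v=\partial_z\uh$ (note $\dive_{\rm h}\partial_z\uh=0$), and the second by the analogous paraproduct bound $|(\partial_z\uh\cdot\nabla_{\rm h}\uh\,|\,\partial_z\uh)_{\dot H^s_{\rm h}}|\lesssim\|\nabla_{\rm h}\uh\|_{L^2_{\rm h}}\|\partial_z\uh\|_{\dot H^s_{\rm h}}\|\nabla_{\rm h}\partial_z\uh\|_{\dot H^s_{\rm h}}$, after which integration in $z$ and Gronwall close the estimate.

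I expect the main obstacle to be the trilinear estimate, and in particular its analogue for the term $\partial_z\uh\cdot\nabla_{\rm h}\uh$ paired with $\partial_z\uh$, where the transport field and the paired field differ, so that the divergence-free cancellation that trivializes the leading paraproduct is unavailable and one must check convergence of every piece of the Bony decomposition over the whole range $s\in\,]-1,1[$. A secondary point requiring care is that the coefficient in every Gronwall exponent must be the $z$-independent norm $\|\nabla_{\rm h}\uh\|_{\LVLH}^2$, since this uniformity is exactly what legitimizes the maximum-principle step in the first inequality.
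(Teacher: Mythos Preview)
Your proposal is correct and follows essentially the same route as the paper: horizontal $\dot H^s$ energy estimates at fixed~$z$, the degenerate vertical diffusion handled via the maximum principle (for the $L^\infty_{\rm v}$ bound) or integration in~$z$ (for the $L^2_{\rm v}$ bounds), an exponential-weight/Gronwall argument with the $z$-uniform coefficient $\|\nabla_{\rm h}\uh\|_{\LVLH}^2$, and differentiation in~$z$ for the third inequality. The only cosmetic differences are that the paper cites Lemma~1.1 of\ccite{chemin10} for the key trilinear bound rather than redoing the Bony decomposition, and that for the cross term $(\partial_z\uh\cdot\nabla_{\rm h}\uh\,|\,\partial_z\uh)_{\dot H^s_{\rm h}}$ in the third estimate it splits explicitly into the cases $s=0$, $s\in\,]0,1[$, $s\in\,]-1,0[$ via standard product laws to arrive at exactly the bound you state.
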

\begin{proof}
Let us perform a~$\dot H^s$ energy estimate in the horizontal
variables for (NS2.5D). This gives, for any~$z$ in~$\Rv$,
$$
\longformule { \frac 12 \frac d {dt}  \|\uh(t,\cdot,z)\|_{\dot
H^s(\Rh)}^2 + \|\nabla_\e\uh(t,\cdot,z)\|_{\dot H^s(\Rh)}^2  -\frac
{\e^2} 2 \partial_z^2 \|\uh(t,\cdot,z)\|_{\dot H^s(\Rh)}^2 } { {}= -
\bigl( \uh(t,\cdot,z)\cdot\nabla_{\rm h} \uh(t,\cdot,z) \big|
\uh(t,\cdot,z)\bigr)_{\dot H^s(\Rh)}. }
$$
Lemma~1.1 of\ccite{chemin10} implies that
$$
\longformule { \big|\bigl( \uh(t,\cdot,z)\cdot\nabla_{\rm h}
\uh(t,\cdot,z) \big| \uh(t,\cdot,z)\bigr)_{\dot H^s(\Rh)}\bigr| }
{{} \lesssim  \|\nabla_{\rm h}\uh(t,\cdot,z)\|_{\dot H^s(\Rh)}\|
\nabla_{\rm h} \uh(t,\cdot,z)\|_{L^2(\Rh)} \|\uh(t,\cdot,z)\|_{\dot
H^s(\Rh)}\,. }
$$
Convexity inequality implies that
$$
\longformule { \big|\bigl( \uh(t,\cdot,z)\cdot\nabla_{\rm h}
\uh(t,\cdot,z) \big| \uh(t,\cdot,z)\bigr)_{\dot H^s(\Rh)}\bigr| }
{{} \leq\frac 1 2  \|\nabla_{\rm h}\uh(t,\cdot,z)\|_{\dot
H^s(\Rh)}^2 +C \| \nabla_{\rm h} \uh(t,\cdot,z)\|_{L^2(\Rh)}^2
\|\uh(t,\cdot,z)\|_{\dot H^s(\Rh)}^2. }
$$
Defining
$$
\uh_\lam(t,\cdot,z) \eqdefa \exp\biggl(-\lam \int_0^t \|\nabla_{\rm
h} \uh(t')\|_{\LVLH} dt'\biggr) \uh(t,\cdot,z).
$$
Then for $\lam\geq C,$ we can write
$$
 \frac d {dt}  \|\uh_\lam(t,\cdot,z)\|_{\dot H^s(\Rh)}^2
+ \|\nabla_\e\uh_\lam(t,\cdot,z)\|_{\dot H^s(\Rh)}^2  -\frac {\e^2} 2 \partial_z^2 \|\uh_\lam(t,\cdot,z)\|_{\dot H^s(\Rh)}^2 \leq 0.
$$
We get the first inequality of the lemma by maximal principle, and
the second one by integration in~$z$ and in time.

In order to prove the third inequality, we take $\p_z$ to the
system~(NS2.5D)  and then perform a $\dot H^s$ energy estimate in
the horizontal variables for the resulting equation. This gives, for
any~$z$ in~$\Rv$, \ben \nonumber&&\frac12  \frac d {dt}
\|\partial_z\uh(t,\cdot,z)\|_{\dot H^s(\Rh)}^2
+ \|\nabla_\e\partial_z\uh(t,\cdot,z)\|_{\dot H^s(\Rh)}^2  -\frac {\e^2} 2 \partial_z^2 \|\partial_z\uh(t,\cdot,z)\|_{\dot H^s(\Rh)}^2 \\
\label{progaanisoelemdemoeq1}&&\qquad\qquad\qquad\qquad\qquad{}= - \bigl( \uh(t,\cdot,z)\cdot\nabla_{\rm h} \partial_z\uh(t,\cdot,z) \big|\partial_z\uh(t,\cdot,z)\bigr)_{\dot H^s(\Rh)}\\
\nonumber&&\qquad\qquad\qquad\qquad\qquad\qquad\qquad\qquad{}-\bigl(
\partial_z\uh(t,\cdot,z)\cdot\nabla_{\rm h} \uh(t,\cdot,z)
\big|\partial_z\uh(t,\cdot,z)\bigr)_{\dot H^s(\Rh)}. \een Again
Lemma~1.1 of\ccite{chemin10} implies that \beq
\label{progaanisoelemdemoeq2}
\begin{split}
&\big|\bigl( \uh(t,\cdot,z)\cdot\nabla_{\rm h} \partial_z\uh(t,\cdot,z) \big| \partial_z\uh(t,\cdot,z)\bigr)_{\dot H^s(\Rh)}\bigr|\\
&\qquad\qquad\qquad{} \lesssim  \|\nabla_{\rm
h}\uh(t,\cdot,z)\|_{L^2(\Rh)}\| \partial_z\nabla_{\rm h}
\uh(t,\cdot,z)\|_{\dot H^s(\Rh)} \|\partial_z\uh(t,\cdot,z)\|_{\dot
H^s(\Rh)}\,.
\end{split}
\eeq If~$s=0$, we use Sobolev embeddings and interpolation theory to
write
\beno
 \!\!\!\!\bigl | \bigl(
\partial_z\uh(t,\cdot,z)\cdot\nabla_{\rm h} \uh(t,\cdot,z)
\big|\partial_z\uh(t,\cdot,z)\bigr)_{L^2(\Rh)} \bigr|
&\leq & \|\nabla_{\rm h} \uh(t,\cdot,z)\|_{L^2(\Rh)}\| \partial_z\uh(t,\cdot,z)\|^2_{L^4(\Rh)}\\
& \lesssim & \|\nabla_{\rm h} \uh(t,\cdot,z)\|_{L^2(\Rh)}
\| \partial_z\uh(t,\cdot,z)\|_{L^2(\Rh)}\\
&&\qquad \qquad \qquad\quad  {}\times \|\nabla_{\rm h}
\partial_z\uh(t,\cdot,z)\|_{L^2(\Rh)}.
\eeno
If~$s$ is different
from~$0$,  laws of product in Sobolev space imply that
$$
\|\partial_z\uh(t,\cdot,z)\cdot\nabla_{\rm
h}\uh(t,\cdot,z)\|_{\dot H^{s-1}(\Rh)} \lesssim
\|\partial_z\uh(t,\cdot,z)\|_{\dot H^{s}(\Rh)} \|\nabla_{\rm h}
\uh(t,\cdot,z)\|_{L^2(\Rh)}
$$
if~$s$ belongs to~$]0,1[$. If~$s$ is in~$]-1,0[$, we have
$$
\|\partial_z\uh(t,\cdot,z)\cdot\nabla_{\rm
h}\uh(t,\cdot,z)\|_{\dot H^{s}(\Rh)} \lesssim
\|\partial_z\nabla_{\rm h}\uh(t,\cdot,z)\|_{\dot H^{s}(\Rh)}
\|\nabla_{\rm h} \uh(t,\cdot,z)\|_{L^2(\Rh)}
$$
Plugging these estimates
and\refeq{progaanisoelemdemoeq2} into\refeq{progaanisoelemdemoeq1}
and using convexity inequality, we obtain
$$
\longformule{   \frac d {dt} \|\partial_z\uh(t,\cdot,z)\|_{\dot
H^s(\Rh)}^2 + \|\nabla_\e\partial_z\uh(t,\cdot,z)\|_{\dot
H^s(\Rh)}^2  -\frac {\e^2} 2 \partial_z^2
\|\partial_z\uh(t,\cdot,z)\|_{\dot H^s(\Rh)}^2 } { {} \leq C \|
\nabla_{\rm h} \uh(t,\cdot,z)\|_{L^2(\Rh)}^2
\|\partial_z\uh(t,\cdot,z)\|_{\dot H^s(\Rh)}^2. }
$$
Arguing as in the proof of the second inequality allows to conclude
the proof of the lemma.
\end{proof}
We can deduce from this lemma the following corollary.
\begin{col}
\label{estimuappco} {\sl Let~$\uh$ be a regular solution
of~(NS2.5D). Then  under the assumptions of
Theorem\refer{slowvarsimplifie}, one has, for $A_\d(\uh_0)$
given by \eqref{estiomglobalfond},
\beq
\label{827z}
\begin{split}
\cN_{u^\e_{\rm app}} \,\lesssim \,& A_\d(\uh_0)+ U_0\exp\bigl( CA_\d(\uh_0)\bigr)\with\\
U_0 \,\eqdefa & \,\e\|\partial_z \uh_0\|_{L^2_{\rm v}(\dot H^{-\frac
12}(\Rh))}^2+ \|\uh_0\|_{L^2_{\rm v}(\dot H^{-\frac
12}(\Rh))}^{\frac12}
 \|\partial_z\uh_0\|_{L^2_{\rm v}(\dot H^{-\frac 12}(\Rh))}^{\frac12}
\\
&\qquad\qquad\qquad\qquad\qquad\qquad {}\times \|\uh_0\|_{L^2_{\rm v}(\dot
H^{\frac 12}(\Rh))}^{\frac12} \|\partial_z\uh_0\|_{L^2_{\rm v}(\dot
H^{\frac 12}(\Rh))}^{\frac12}.
\end{split}
\eeq
}
\end{col}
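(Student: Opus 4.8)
The plan is to bound separately each of the three contributions to $N(\uapp(t))$ displayed in \eqref{4.12}, and then to integrate in time. The horizontal dissipation contribution is immediate: Theorem~\ref{estimfondNS2.5} gives directly $\int_0^\infty\|\nabla_{\rm h}\uh(t)\|_{\LVLH}^2\,dt\leq A_\d(\uh_0)$, which accounts for the leading term $A_\d(\uh_0)$ on the right-hand side of \eqref{827z}. It remains to show that the other two contributions are controlled by $U_0\exp(CA_\d(\uh_0))$, and this is where Lemma~\ref{progaanisoelem} is used. Throughout, the exponential factors $\exp\bigl(C_s\int_0^t\|\nabla_{\rm h}\uh\|_{\LVLH}^2\bigr)$ produced by that lemma are bounded by $\exp(CA_\d(\uh_0))$ thanks to the same estimate from Theorem~\ref{estimfondNS2.5}.

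For the vertical viscosity contribution $\e\|\partial_z\uh(t)\|_{L^2_{\rm v}(\dot H^{1/2}(\Rh))}^2$, I would apply the third inequality of Lemma~\ref{progaanisoelem} with $s=-\frac12$. The key point is the identity $\|\nabla_{\rm h}\partial_z\uh\|_{L^2_{\rm v}(\dot H^{-1/2})}=\|\partial_z\uh\|_{L^2_{\rm v}(\dot H^{1/2})}$, so that the time integral we want is exactly the dissipation integral supplied by that inequality. This yields $\e\int_0^\infty\|\partial_z\uh(t)\|_{L^2_{\rm v}(\dot H^{1/2})}^2\,dt\lesssim\e\|\partial_z\uh_0\|_{L^2_{\rm v}(\dot H^{-1/2})}^2\exp(CA_\d(\uh_0))$, which is precisely the first term of $U_0$ multiplied by the exponential.

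The main work is the $L^\infty$ contribution $\int_0^\infty\|\uh(t)\|_{L^\infty(\R^3)}^2\,dt$. First I would pass to horizontal Sobolev norms by an anisotropic embedding: combining the two-dimensional interpolation \eqref{perturaNSanisolightdemoeq5}, applied pointwise in $z$ and then supremized, with the one-dimensional trace estimate \eqref{perturaNSanisolightdemoeq6b} used in the Hilbert space $\cH=\dot H^{1/2}(\Rh)$ for both $\uh$ and $\nabla_{\rm h}\uh$, one obtains the pointwise-in-time bound
$$
\|\uh\|_{L^\infty(\R^3)}^2\lesssim\|\uh\|_{L^2_{\rm v}(\dot H^{1/2})}^{1/2}\,\|\partial_z\uh\|_{L^2_{\rm v}(\dot H^{1/2})}^{1/2}\,\|\uh\|_{L^2_{\rm v}(\dot H^{3/2})}^{1/2}\,\|\partial_z\uh\|_{L^2_{\rm v}(\dot H^{3/2})}^{1/2}.
$$
Integrating in time by H\"older's inequality with four exponents all equal to $4$ turns the right-hand side into the product of the fourth roots of the four time-integrals $\int_0^\infty\|\uh\|_{L^2_{\rm v}(\dot H^{1/2})}^2$, $\int_0^\infty\|\partial_z\uh\|_{L^2_{\rm v}(\dot H^{1/2})}^2$, $\int_0^\infty\|\uh\|_{L^2_{\rm v}(\dot H^{3/2})}^2$ and $\int_0^\infty\|\partial_z\uh\|_{L^2_{\rm v}(\dot H^{3/2})}^2$. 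Each of these four integrals is a dissipation integral controlled by Lemma~\ref{progaanisoelem}: the second inequality with $s=-\frac12$ and with $s=\frac12$ bounds the two $\uh$-integrals by $\|\uh_0\|_{L^2_{\rm v}(\dot H^{-1/2})}^2\exp(CA_\d)$ and $\|\uh_0\|_{L^2_{\rm v}(\dot H^{1/2})}^2\exp(CA_\d)$ (using $\|\nabla_{\rm h}f\|_{\dot H^{s}(\Rh)}=\|f\|_{\dot H^{s+1}(\Rh)}$), while the third inequality with the same two values of $s$ does the same for the two $\partial_z\uh$-integrals. Multiplying the four quarter-powers reproduces exactly the second term of $U_0$, the four exponentials combining into a single $\exp(CA_\d(\uh_0))$; summing the three contributions then gives \eqref{827z}.

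The step I expect to require the most care is the bookkeeping of the Sobolev exponents: one must choose $s=\pm\frac12$ in Lemma~\ref{progaanisoelem} so that, after the identity $\|\nabla_{\rm h}f\|_{\dot H^{s}(\Rh)}=\|f\|_{\dot H^{s+1}(\Rh)}$, the four dissipation integrals deliver precisely the homogeneous norms $\dot H^{\pm1/2}$ of $\uh_0$ and $\partial_z\uh_0$ appearing in $U_0$, and so that the propagation exponentials assemble into the claimed single $\exp(CA_\d(\uh_0))$. The analytic ingredients, namely the anisotropic embedding and the H\"older splitting in time, are otherwise routine once this matching is set up.
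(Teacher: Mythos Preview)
Your proposal is correct and follows essentially the same route as the paper: the same anisotropic embedding \eqref{perturaNSanisolightdemoeq5} combined with the trace inequality \eqref{perturaNSanisolightdemoeq6b}, followed by Lemma~\ref{progaanisoelem} at $s=\pm\tfrac12$ and Theorem~\ref{estimfondNS2.5}. Your explicit four-factor H\"older splitting is exactly what underlies the paper's terse ``Lemma~\ref{progaanisoelem} implies that\ldots'' step, and in fact your bookkeeping yields the factor $\|\partial_z\uh_0\|_{L^2_{\rm v}(\dot H^{1/2})}^{1/2}$ that matches $U_0$, whereas the paper's displayed intermediate bound contains a harmless misprint ($\|\partial_z\nabla_{\rm h}\uh_0\|$ in place of $\|\partial_z\uh_0\|$).
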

\begin{proof}
Lemma\refer{progaanisoelem} implies that
\beq
\label{estimuappcodemoeq1}
\int_0^\infty \|\partial_z \uh(t)\|_{L^2_{\rm v}(\dot H^{\frac 12}(\Rh))}^2 dt  \leq
\|\partial_z \uh_0\|_{L^2_{\rm v}(\dot H^{-\frac 12}(\Rh))}^2
\exp\biggl(C \int_0^\infty \|\nabla_{\rm h} \uh(t,\cdot)\|_{\LVLH}^2dt\biggr).
\eeq
As we have
$$
\|\uh(t,\cdot,z)\|^2_{L^\infty_{\rm h}} \lesssim \|\uh(t,\cdot,z)\|_{\dot H^{\frac 12}(\Rh)}
\|\nabla_{\rm h}\uh(t,\cdot,z)\|_{\dot H^{\frac 12}(\Rh)}.
$$
Using\refeq{perturaNSanisolightdemoeq6b}, we infer that
$$
\longformule{
\|\uh(t)\|^2_{L^\infty(\R^3)} \lesssim  \|\uh(t)\|_{L^2(\Rv;\dot H^{\frac 12}(\Rh))}^{\frac12}
 \|\partial_z\uh(t)\|_{L^2(\Rv;\dot H^{\frac 12}(\Rh))}^{\frac12} }
 {
{}\times \|\nabla_{\rm h}\uh(t)\|_{L^2(\Rv;\dot H^{\frac 12}(\Rh))}^{\frac12}
\|\partial_z\nabla_{\rm h}\uh(t)\|_{L^2(\Rv;\dot H^{\frac 12}(\Rh))}^{\frac12} .
}
$$
Lemma\refer{progaanisoelem} implies that
$$
\longformule{ \int_0^\infty \|\uh(t)\|^2_{L^\infty(\R^3)} dt
\lesssim   \|\uh_0\|_{L^2_{\rm v}(\dot H^{-\frac
12}(\Rh))}^{\frac12}
 \|\partial_z\uh_0\|_{L^2_{\rm v}(\dot H^{-\frac 12}(\Rh))}^{\frac12}
 }
 {
{}\times \|\uh_0\|_{L^2_{\rm v}(\dot H^{\frac 12}(\Rh))}^{\frac12}
\|\partial_z\nabla_{\rm h}\uh_0\|_{L^2_{\rm v}(\dot H^{\frac
12}(\Rh))}^{\frac12} \exp\biggl(C \int_0^\infty \|\nabla_{\rm h}
\uh(t,\cdot)\|_{\LVLH}^2dt\biggr). }
$$
Together with \eqref{4.12} and Theorem\refer{estimfondNS2.5}, this
ensures \eqref{827z}.
\end{proof}

Finally let us present the proof of Theorem\refer{slowvarsimplifie}.

\begin{proof}[Proof of Theorem\refer{slowvarsimplifie}] By virtue of Lemma \ref{perturaNSanisolight} and Corollary
\ref{estimuappco}, in order to conclude the proof of the Theorem, it
amounts to prove that \beq \label{ineqconclude}
\|F_\e\|_{L^2(\R^+;\dot H^{-\frac 12}(\R^3))} \lesssim C(\uh_0)\,
\e^{\frac 12}. \eeq Toward this, we get, by first applying the
operator~$\dive_{\rm h}$ to the equation~(NS2.5D) and then taking
$\p_z$ to the resulting equation, that
$$
\partial_z p^{{\rm h}} (t,\cdot, ,z) = 2\sum_{1\leq j,k\leq 2} (-\D_{\rm h})^{-1} \partial_j\partial_k\bigl(u^{{\rm h},j}(t,\cdot, z)\partial_z u^{{\rm h},k}(t,\cdot,z)\bigr).
$$
Law of product for Sobolev spaces for the horizontal variables
implies that
$$
\bigl\|u^{{\rm h},j}(t,\cdot, z)\partial_z u^{{\rm h},k}(t,\cdot,z)\bigr\|_{\dot H^{-\frac 12}(\Rh)}
\lesssim \|u^{{\rm h}}(t,\cdot, z)\|_{L^2_{\rm h}} \|\partial_z u^{{\rm h}}(t,\cdot,z)\bigr\|_{\dot H^{\frac 12}(\Rh)}\,.
$$
As~$(-\D_{\rm h})^{-1} \partial_j\partial_k$ is a bounded Fourier multiplier, we get that
$$
\|\partial_z p^{{\rm h}}(t)\|_{L^2_{\rm v}(\dot H^{-\frac 12}(\Rh))}
\lesssim
 \|u^{{\rm h}}(t)\|_{\LVLH} \|\partial_z u^{{\rm h}}(t)\bigr\|_{L^2_{\rm v}(\dot H^{\frac 12}(\Rh))}.
 $$
 Changing variable~Ê$z$ into~$\e x_3$ gives
$$
\|F_\e(t)\|_{L^2_{\rm v}(\dot H^{-\frac 12}(\Rh))} \lesssim
\e^{\frac 12} \|\partial_z p^{{\rm h}}(t)\|_{L^2_{\rm v}(\dot
H^{-\frac 12}(\Rh))} \lesssim \e^{\frac 12}
 \|u^{{\rm h}}(t)\|_{\LVLH} \|\partial_z u^{{\rm h}}(t)\bigr\|_{L^2_{\rm v}(\dot H^{\frac 12}(\Rh))}.
$$
Hence we obtain \beno \|F_\e\|_{L^2(\R^+;\dot H^{-\frac 12}(\R^3))}
&\lesssim& \|F_\e\|_{L^2(\R^+;L^2_{\rm v}(\dot H^{-\frac
12}(\Rh)))}\\
 &\lesssim& \e^{\frac 12}
 \|u^{{\rm h}}\|_{L^\infty_t(\LVLH)} \|\partial_z u^{{\rm h}}\bigr\|_{L^2(\R^+;L^2_{\rm v}(\dot H^{\frac
 12}(\Rh)))}.
 \eeno
Lemma\refer{progaanisoelem} allows to conclude the proof of
\eqref{ineqconclude} and thus of  Theorem\refer{slowvarsimplifie}.
\end{proof}

\medskip

\noindent {\bf Acknowledgments.} Part of this work was done when
Jean-Yves Chemin was visiting Morningside Center of the Academy of
Mathematics and Systems Sciences, CAS. He appreciates the
hospitality and the financial support from MCM and  National Center
for Mathematics and Interdisciplinary Sciences. P. Zhang is
partially supported by NSF of China under 11371347, the fellowship
from Chinese Academy of Sciences and innovation grant from National
Center for Mathematics and Interdisciplinary Sciences.
\medskip



\begin{thebibliography}{9999}

 \bibitem{adt}
P.~Auscher, S.~Dubois, and P.~Tchamitchian, On the stability of global solutions to {N}avier-Stokes equations in  the space,  {\it Journal de Math\'ematiques Pures et Appliqu\'ees}, {\bf 83}, 2004, pages 673-697.



\bibitem{bcg2}
 H. Bahouri, J.-Y. Chemin and I. Gallagher,
 Stability by  rescaled weak convergence for the Navier-Stokes
 equations,  {\it arXiv:1310.0256}.


\bibitem{bg}   H. Bahouri and I. Gallagher, On the stability in   weak topology  of the set of global solutions to the Navier-Stokes equations,  {\it Archive for Rational Mechanics
and Analysis}, {\bf 209}, 2013, pages 569-629.


\bibitem{brandolese}
L. Brandolese, Asymptotic behavior of the energy and pointwise
estimates for solutions, to the Navier-Stokes equations, {\it
Revista  Mathematica Iberoamericana}, {\bf  20}, 2004, page
223-256.


\bibitem{chemin10} J.-Y. Chemin,
Remarques sur l'existence pour le syst\`eme de Navier-Stokes incompressible,
{\it  SIAM Journal of Mathematical Analysis},
{\bf 23}, 1992, pages~20--28.

 \bibitem{cg3} J.-Y. Chemin and I. Gallagher, Large, global solutions to the
Navier-Stokes equations, slowly varying in one direction,  {\it Transactions of the American Mathematical Society}, {\bf 362}, 2010,  pages 2859-2873.


 \bibitem{cgm}
J.-Y. Chemin,  I. Gallagher and C. Mullaert, The role of spectral
anisotropy in the resolution of the  three-dimensional Navier-Stokes
equations, ``Studies in Phase Space Analysis with Applications to
PDEs", {\it Progress in Nonlinear Differential Equations and Their
Applications} {\bf 84}, Birkhauser, pages 53-79, 2013.


 \bibitem{cgz}
J.-Y. Chemin,  I. Gallagher and P. Zhang, Sums of large global
solutions  to the incompressible  Navier-Stokes equations,  {\it
Journal f\"ur die reine und angewandte Mathematik}, {\bf 681}, 2013,
pages 65-82.

\bibitem{fujitakato}
H. Fujita and T. Kato, On the Navier-Stokes initial value problem I,
{\em Archive for Rational Mechanic Analysis}, {\bf 16}, 1964, pages
269--315.

 \bibitem{gipcras}
I. Gallagher,  D. Iftimie and F. Planchon,
 Non-explosion en temps grand et stabilit\'e de solutions globales des \'equations de Navier-Stokes,
 {\it Comptes  Rendus Math\'ematiques de l'Acad\'emie des Sciences de  Paris}, {\bf 334} (4),  2002, pages 289-292.

 \bibitem{gip}
I. Gallagher,  D. Iftimie and F. Planchon,  Asymptotics
   and stability for global solutions to the
   Navier--Stokes equations, {\it  Annales de l'Institut Fourier},  {\bf 53}, 2003, pages 1387-1424.

\bibitem{kochtataru} H. Koch and D. Tataru, Well--posedness for the
Navier--Stokes equations, {\it Advances in
Mathematics},  {\bf 157}, 2001, pages~22-35.



\bibitem{Schonbek} M. Schonbek,
Lower Bounds of Rates of Decay for Solutions to the
Navier-Stokes equations,
{\it Journal of the  American  Mathematical  Society}, {\bf 4}, 1991, pages  423-449.

\bibitem{Schonbek2}
M. Schonbek and T.  Schonbek,
On the boundedness and decay
of moments of solutions of the NavierÐStokes equations.
{\it Advances in  Differential Equations}, {\bf  5}, 2000, pages  861-898.


\bibitem{Wiegner} M. Wiegner, Decay results for weak solutions to the Navier-Stokes equations on~${\R}^n$,
{\it Journal of the  London Mathematical  Society}, {\bf 35}, 1987, pages 303--313.

\end{thebibliography}
\end{document}